\newtheorem{theorem}{Theorem}[section]
\newtheorem{lemma}[theorem]{Lemma}
\newtheorem{proposition}[theorem]{Proposition}
\newtheorem{corollary}[theorem]{Corollary}
\theoremstyle{definition}
\newtheorem{definition}[theorem]{Definition}
\newtheorem{remark}[theorem]{Remark}
\numberwithin{equation}{section}
\numberwithin{subsection}{section}
\newcommand\supp{{\rm spt}}
\newcommand\res{\mathop{\hbox{\vrule height 7pt width .3pt depth 0pt
\vrule height .3pt width 5pt depth 0pt}}\nolimits}
\newcommand\ser{\mathop{\hbox{\vrule height .3pt width 5pt depth 0pt
\vrule height 7pt width .3pt depth 0pt}}\nolimits}
\newcommand{\mass}{{\mathbf{M}}}
\newcommand{\cone}{{\times\hspace{-0.6em}\times\,}}
\newcommand{\bOmega}{{\mathbf{\Omega}}}
\newcommand{\de}{\partial}
\newcommand{\bI}{{\mathbf{I}}}
\def\a#1{\left\llbracket{#1}\right\rrbracket}
\newcommand{\bE}{{\mathbf{E}}}
\newcommand\weaks{{\stackrel{*}{\rightharpoonup}}\,}
\newcommand{\bB}{{\mathbf{B}}}
\newcommand{\bC}{{\mathbf{C}}}
\newcommand\proj{\mathbf{p}}
\newcommand{\bP}{\mathbf{P}}
\newcommand\N{{\mathbb N}}
\newcommand\R{{\mathbb R}}
\newcommand{\eps}{{\varepsilon}}
\newcommand{\bA}{\mathbf{A}}
\newcommand{\Lip}{{\rm {Lip}}}
\newcommand{\dist}{{\rm {dist}}}
\title[Uniqueness of tangent cones]
{Uniqueness of tangent cones for $2$-dimensional almost minimizing currents}
\author{Camillo De Lellis}
\address{Mathematik Institut der Universit\"at Z\"urich}
\email{camillo.delellis@math.uzh.ch}
\author{Emanuele Spadaro}
\address{Max-Planck-Institut f\"ur Mathematik in den Naturwissenschaften, Leipzig}
\email{spadaro@mis.mpg.de}
\author{Luca Spolaor}
\address{Mathematik Institut der Universit\"at Z\"urich}
\email{luca.spolaor@math.uzh.ch}
\begin{document}

\begin{abstract}
We consider $2$-dimensional integer rectifiable currents which are almost area minimizing and show that their tangent cones are everywhere unique. Our argument unifies a few uniqueness theorems of the same flavor, which are all obtained by a suitable modification of White's original theorem for area minimizing currents in the euclidean space. This note is also the first step in a regularity program for semicalibrated $2$-dimensional currents and spherical cross sections of $3$-dimensional area minimizing cones.
\end{abstract}

\maketitle

In this paper we consider $2$-dimensional integer rectifiable currents $T$ in the euclidean space $\R^{n+2}$ which
are almost (area) minimizing, in the following sense (for the notation and terminology we refer the reader to the textbooks 
\cite{Fed} and \cite{Sim}).

\begin{definition}\label{d:AM} An $m$-dimensional integer rectifiable current $T$ in $\R^{m+n}$ is
\emph{almost (area) minimizing} if for every $x\not\in \supp (\partial T)$ there are constants 
$C_{0}, r_0, \alpha_0 > 0$ such that 
\begin{equation}\label{e:almost minimizer2}
\|T\| (\bB_r(x)) \leq \|T + \partial S\|(\bB_r(x)) + C_{0}\, r^{m +\alpha_0}
\end{equation}
for all $0<r<r_0$ and for all integral $(m+1)$-dimensional currents $S$ supported in $\bB_r (x)$.
\end{definition}

Our aim is to extend Brian White's classical result (cf.~\cite{Wh}) on the uniqueness of tangent cones for area minimizing
$2$-dimensional currents to almost minimizers, an abstract result which can then be applied to several interesting geometric problems, recovering quickly known statements but also gaining some new ones. To state the
main theorem we introduce the current $(\iota_{x,r})_\sharp T$, where the map $\iota_{x,r}$ is given by $\R^{m+n} \ni y \mapsto \frac{y-x}{r}\in \R^{m+n}$. Recall that an area minimizing cone $S$ is an integral area minimizing current 
such that $(\iota_{0,r})_\sharp S = S$ for every $r>0$ (cf. \cite[Theorem 19.3]{Sim}).

\begin{theorem}\label{t:Uniqueness}
Assume $T$ is a $2$-dimensional integer rectifiable almost minimizing current in $\R^{n+2}$. Then for every $x\in \supp (T)\setminus \supp (\partial T)$ there is a $2$-dimensional area-minimizing cone $T_x$ with $\partial T_x = 0$ such that $T_{x,r}\to T_x$ (in the sense of currents) as $r\downarrow 0$.
\end{theorem}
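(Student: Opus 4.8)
The plan is to follow the strategy of White's original theorem, adapting each step to accommodate the error term $C_0 r^{m+\alpha_0}$ in the almost minimality condition. First I would fix $x\in\supp(T)\setminus\supp(\partial T)$ and, after translating, assume $x=0$. The first task is to establish the preliminary structural facts that make the tangent cone analysis run: upper semicontinuity and existence of the density $\Theta(T,0)$, via an almost-monotonicity formula for the mass ratio $r\mapsto e^{\Lambda r^{\alpha_0}}\|T\|(\bB_r)/(\omega_m r^m)$, which is monotone because the error $C_0 r^{m+\alpha_0}$ produces only an integrable perturbation of the exact monotonicity formula. This gives compactness of the rescalings $T_{0,r}$ and guarantees that every subsequential limit is an area minimizing cone with zero boundary (here the almost minimizing property degenerates in the limit to genuine minimality, because the error scales subcritically: if $S$ is supported in $\bB_r$, then the competitor for $T_{0,\rho}$ at scale $r$ has error $C_0(\rho r)^{m+\alpha_0}/\rho^m = C_0\rho^{\alpha_0} r^{m+\alpha_0}\to 0$). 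So the set of tangent cones at $0$ is a nonempty, compact, connected subset of the space of area minimizing cones.

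The heart of the proof, exactly as in White, is an epiperimetric-type inequality combined with a Łojasiewicz–Simon gradient inequality argument at the level of the cross-section. In dimension $m=2$ the cross-section of a tangent cone is a finite collection of geodesic arcs (great circles with multiplicity) in $\sphere^{n+1}$, and the space of such cones is effectively finite-dimensional; one sets up the Dirichlet-energy minimization on $\sphere^{n+1}$ and proves an epiperimetric inequality: for any $2$-dimensional current in the unit ball whose boundary is close to the cone over a cross-section $\Sigma$, there is a competitor with the same boundary whose mass excess over the cone is a definite fraction $(1-\kappa)$ smaller. I would then combine this with the almost-monotonicity formula to derive a differential inequality for the mass excess $\Theta(T,0)\,\omega_m r^m - \|T\|(\bB_r)$, of the form that forces a power-rate decay $\mathbf{E}(r)\lesssim r^{\beta}$ for some $\beta>0$; the error term $C_0 r^{m+\alpha_0}$ is absorbed because $m+\alpha_0>m$ and $\beta$ can be taken smaller than $\alpha_0$. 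This decay of the excess, together with a standard interpolation/Poincaré argument, yields that $r\mapsto T_{0,r}$ is Cauchy in the flat topology (indeed in a suitable $L^2$-type distance between cross-sections), hence converges to a unique limit $T_0$.

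The main obstacle, and the point requiring the most care, is establishing the epiperimetric inequality in the almost minimizing setting and checking that the error term genuinely does not interfere with the iteration. Concretely, the delicate issue is that the competitor produced by the epiperimetric inequality must be admissible as a comparison surface in the almost minimality inequality \eqref{e:almost minimizer2}, and one must track how the scaling-invariant error $C_0 r^{\alpha_0}$ interacts with the gain factor $(1-\kappa)$: one needs the decay exponent $\beta$ extracted from $\kappa$ to be strictly positive and no larger than $\alpha_0$, so that the geometric decay survives the perturbation. A secondary technical point is that, unlike in the minimizing case, the rescaled currents $T_{0,r}$ are not themselves minimizing, so at each scale one only knows an \emph{almost} minimizing property with a scale-dependent constant; one must show this constant tends to zero fast enough, which is where the hypothesis $\alpha_0>0$ (rather than merely $\geq 0$) is essential. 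Once the excess decay is in hand, the passage to uniqueness of the tangent cone is routine: summing a geometric series controls the flat distance between dyadic rescalings, and continuity of $r\mapsto T_{0,r}$ upgrades this to convergence along the full family $r\downarrow 0$.
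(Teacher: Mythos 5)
Your plan follows essentially the same route as the paper: an almost-monotonicity formula giving compactness of the blow-ups (whose subsequential limits are area-minimizing cones, hence finite sums of planes), a version of White's epiperimetric inequality adapted to almost minimizers --- which the paper obtains by a compactness/contradiction argument reducing to the genuinely minimizing case, exactly addressing the ``main obstacle'' you identify --- the resulting differential inequality and power-rate decay of the mass excess with the $C_0 r^{\alpha_0}$ error absorbed, and finally the radial-projection/Cauchy-in-flat-norm argument to conclude uniqueness. The only inessential discrepancy is your invocation of a \L{}ojasiewicz--Simon gradient inequality, which neither White nor the paper uses (in two dimensions the epiperimetric inequality is proved directly via a Fourier decomposition of the boundary curve), and your scheme does not in fact rely on it anywhere.
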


From this theorem we conclude three interesting corollaries as special cases.

\begin{definition}\label{d:semicalibrated}
Let $\Sigma \subset R^{m+n}$ be a $C^2$ submanifold and $U\subset \R^{m+n}$ an open set.
\begin{itemize}
  \item[(a)] An $m$-dimensional integral current $T$ with finite mass and $\supp (T)\subset \Sigma\cap U$ is area-minimizing in $\Sigma\cap U$
if $\mass(T + \partial S)\geq \mass(T)$ for any $m+1$-dimensional integral current $S$ with $\supp (S) \subset \subset \Sigma\cap U$.
 \item[(b)] A semicalibration (in $\Sigma$) is a $C^1$ $m$-form $\omega$ on $\Sigma$ such that 
  $\|\omega_x\|_c \leq 1$ at every $x\in \Sigma$, where $\|\cdot \|_c$ denotes the comass norm on $\Lambda^m T_x \Sigma$. 
  An $m$-dimensional integral current $T$ with $\supp (T)\subset \Sigma$ is {\em semicalibrated} by $\omega$ if $\omega_x (\vec{T}) = 1$ for $\|T\|$-a.e. $x$.
  \item[(c)]  An $m$-dimensional integral current $T$ supported in $\partial \bB_R (x) \subset \R^{m+n}$ is a {\em spherical cross-section of an area-minimizing cone} if ${x\cone T}$ is area-minimizing. 
\end{itemize}
\end{definition}

In all these cases, when $m=2$, we conclude the uniqueness of tangent cones  from Theorem \ref{t:Uniqueness} and the following 

\begin{proposition}\label{p:AM}
Under the assumptions of Definition \ref{d:semicalibrated}, any $m$-dimensional current $T$ as in (a), (b) or (c) is almost minimizing in the sense of Definition \ref{d:AM} and therefore, by Theorem \ref{t:Uniqueness}, it has a unique tangent cone at every $x\not\in \supp (\partial T)$ if $m=2$.
\end{proposition}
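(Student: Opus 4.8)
The plan is to verify \eqref{e:almost minimizer2} separately in the three cases, each time turning a general competitor $T':=T+\partial S$ (with $\supp S\subset \bB_r(z)$) into a form to which the defining property of $T$ applies. Throughout I fix $z\in\supp T\setminus\supp(\partial T)$ — if $z\notin\supp T$ the inequality is trivial for small $r$ — I let $\Sigma$ denote the ambient $C^2$ submanifold ($\Sigma=\partial\bB_R(x)$ in case (c)), and I record two preliminary reductions used in all three cases. First, $T$ has generalized mean curvature bounded by a constant in $\Sigma$: because $T$ is stationary in $\Sigma$ in case (a); because $\vec H_T$ is controlled by $\|d\omega\|_{C^0}$ in case (b); and because $x\cone T$ being stationary forces $T$ to be minimal in $\partial\bB_R(x)$ in case (c). Hence the monotonicity formula yields $\rho_0,C_1>0$ with $\|T\|(\bB_\rho(z))\le C_1\rho^m$ for all $\rho<\rho_0$; in particular one may assume $\|T'\|(\bB_r(z))\le\|T\|(\bB_r(z))$, the reverse case being trivial. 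Second, $\partial S=T'-T$ is an integral $m$-cycle supported in $\overline{\bB}_\rho(z)$ for some $\rho<r$ with $\mass(T'-T)\le 2C_1\rho^m$; since $T'=T+\partial\big(z\cone(T'-T)\big)$, one may replace $S$ by the cone $z\cone(T'-T)$ and thereby also assume $\supp S\subset\overline{\bB}_\rho(z)$ and $\mass(S)\le C_2\,\rho^{m+1}$.

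In case (a) I would use the nearest point projection $\proj$ onto $\Sigma$, which near $\Sigma$ is $C^1$ with Lipschitz constant $\le 1+c\,\dist(\cdot,\Sigma)$, $c=c(\Sigma)$. Since $\supp T\subset\Sigma$ one has $\proj_\sharp T=T$, hence $\proj_\sharp T'=T+\partial(\proj_\sharp S)$ with $\supp(\proj_\sharp S)\subset\overline{\bB}_{2\rho}(z)\cap\Sigma\subset\subset\Sigma\cap U$. Minimality of $T$ in $\Sigma\cap U$ then gives $\|T\|(\overline{\bB}_{2\rho}(z))\le\|\proj_\sharp T'\|(\overline{\bB}_{2\rho}(z))$; splitting $T'=T\res(\R^{m+n}\setminus\overline{\bB}_\rho(z))+T'\res\overline{\bB}_\rho(z)$, observing that the first piece lies in $\Sigma$ and is fixed by $\proj_\sharp$, and bounding the mass of the pushforward of the second piece by $(1+c\rho)^m\|T'\|(\overline{\bB}_\rho(z))$, one obtains $\|T\|(\overline{\bB}_\rho(z))\le(1+c\rho)^m\|T'\|(\overline{\bB}_\rho(z))\le\|T'\|(\overline{\bB}_\rho(z))+C\rho^{m+1}$ by the density bound. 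Passing from $\overline{\bB}_\rho(z)$ to $\bB_r(z)$ (routine, as $T'=T$ on $\bB_r(z)\setminus\overline{\bB}_\rho(z)$) produces \eqref{e:almost minimizer2} with $\alpha_0=1$.

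In case (b) I would extend $\omega$ to a $C^1$ $m$-form $\tilde\omega$ near $\Sigma$: set $\hat\omega_y:=\omega_y\circ\Lambda^m\pi_y$ on $\Sigma$ ($\pi_y$ the orthogonal projection onto $T_y\Sigma$), which preserves the comass bound and is $C^1$ because $\Sigma$ is $C^2$, then extend the coefficients; then $\|\tilde\omega_y\|_c\le 1+C\dist(y,\Sigma)$ and $\langle\tilde\omega_y,\vec T_y\rangle=1$ $\|T\|$-a.e., since $\vec T_y\in\Lambda^m T_y\Sigma$. Testing $T$ and $T'$ against $\chi\tilde\omega$, $\chi\equiv1$ on $\overline{\bB}_\rho(z)$ and $\supp\chi\subset\bB_{2\rho}(z)$, the calibration inequality for $T'$ and the exact identity for $T$ give $(T-T')(\chi\tilde\omega)\ge\|T\|(\overline{\bB}_\rho(z))-(1+C\rho)\|T'\|(\overline{\bB}_\rho(z))-C\rho^{m+1}$, while Stokes gives $(T-T')(\chi\tilde\omega)=-S(d\tilde\omega)$ with $|S(d\tilde\omega)|\le\|d\tilde\omega\|_{C^0}\mass(S)\le C\rho^{m+1}$; the density bounds then yield $\|T\|(\overline{\bB}_\rho(z))\le\|T'\|(\overline{\bB}_\rho(z))+C\rho^{m+1}$, and one concludes as in case (a). In case (c), using $\partial(x\cone P)=P-x\cone\partial P$ for $x\notin\supp P$ (legitimate since $\rho<R$), one checks $\partial(x\cone T'-S)=\partial(x\cone T)$, so $x\cone T'-S$ is an admissible competitor for the area-minimizing cone $x\cone T$ and $\mass(x\cone T)\le\mass(x\cone T')+\mass(S)$. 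Since $T$ is tangent to $\partial\bB_R(x)$ one has $\mass(x\cone T)=\tfrac{R}{m+1}\mass(T)$, whereas $\mass(x\cone T')\le\tfrac{1}{m+1}\int|y-x|\,d\|T'\|\le\tfrac{R}{m+1}\|T\|(\R^{m+n}\setminus\overline{\bB}_\rho(z))+\tfrac{R+\rho}{m+1}\|T'\|(\overline{\bB}_\rho(z))$, because $T'=T$ off $\overline{\bB}_\rho(z)$ and $|y-x|\le R+\rho$ there. Subtracting the common term and inserting $\mass(S)\le C_2\rho^{m+1}$ gives $\tfrac{R}{m+1}\|T\|(\overline{\bB}_\rho(z))\le\tfrac{R+\rho}{m+1}\|T'\|(\overline{\bB}_\rho(z))+C_2\rho^{m+1}$, and once more the density bounds give $\|T\|(\overline{\bB}_\rho(z))\le\|T'\|(\overline{\bB}_\rho(z))+C\rho^{m+1}$, completing \eqref{e:almost minimizer2}; the last assertion is then immediate from Theorem \ref{t:Uniqueness}.

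The hard part, in all three cases, is that the competitor $S$ is given only in the ambient ball $\bB_r(z)$, with no a priori bound on $\mass(S)$. The reduction $S\rightsquigarrow z\cone(T'-T)$ — so that trading $S$ for a cone costs only $O(\rho^{m+1})$ — and the final absorption of all error terms both rely on the monotonicity-type density estimate $\|T\|(\bB_\rho(z))\le C_1\rho^m$; apart from this, the only genuinely case-specific inputs are the sharp Lipschitz behaviour of the nearest point projection in case (a) and the exact cone-mass identity $\mass(x\cone T)=\tfrac{R}{m+1}\mass(T)$ in case (c), which is precisely what makes the bulk term cancel. I expect deriving the density bound and these two structural facts to be the only real work; the rest is bookkeeping with restrictions of currents and the passage between open and closed balls.
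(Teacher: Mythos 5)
Your argument is correct and follows essentially the same route as the paper: a density bound $\|T\|(\bB_\rho(z))\leq C\rho^m$ from the monotonicity formula, a replacement of $S$ by a filling of $\partial S$ with mass $O(\rho^{m+1})$ (you use the cone $z\cone \partial S$ where the paper invokes the isoperimetric inequality), and then the same three case-specific comparisons --- the Lipschitz estimate for the nearest-point projection in (a), the calibration identity plus Stokes in (b), and the cone-mass comparison for $x\cone T$ in (c) --- the only organizational difference being that the paper first establishes the global inequality $\mass(T)\leq \mass(T+\partial S)+\bOmega\,\mass(S)$ of Proposition \ref{p:quasi_minimalita} and then localizes, whereas you localize from the start. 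The one step you assert rather than prove is that $T$ has bounded generalized mean curvature in cases (b) and (c), which is what licenses the density bound; the paper devotes part of Proposition \ref{p:quasi_minimalita} to this via the first-variation identities \eqref{e:var_prima_(b)} and \eqref{e:var_prima_(c)}, but since the required two-sided first-variation estimate follows quickly from the calibration inequality (and from stationarity of the cone in case (c)), this is a presentational rather than a substantive gap.
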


In this paper we will consider only Riemannian submanifolds $\Sigma$ of some euclidean space. However,
since all the statements are local, by Nash's isometric embedding theorem we can infer the same conclusions in any abstract Riemannian manifold which is sufficiently regular: in particular, since we need $C^2$ regularity in the embedded case, in the abstract setting we can derive the same consequences when the Riemannian metric is $C^{2,\alpha}$ for some positive $\alpha$.

The uniqueness of tangent cones for $2$-dimensional area-minimizing currents in Riemannian manifolds (case (a)) has been proved first by Chang in \cite{Chang}. The same statement for semicalibrated integral $2$-dimensional cycles (case (b)) has been shown more recently by Pumberger and Rivi\`ere in \cite{PuRi}. As far as we know the result for
spherical cross sections of $3$-dimensional area-minimizing cones is instead new.
Our motivation comes in fact from the interior regularity theory for all these objects: in a series of forthcoming papers we will prove that any $2$-dimensional current as in (a), (b) or (c) is either a regular submanifold in the interior or has isolated singularities. The latter result is due to Chang in case (a), but as far as we know the details of one crucial step in Chang's proof have never appeared. It is instead due to Bellettini and Rivi\`ere for a particular case of (b), see \cite{BeRi}: in their theorem $\Sigma$ is the $5$-dimensional standard sphere and the semicalibrated currents are the so-called special legendrian cycles. In all the other situations such regularity theorem would be a new
result and this note is the first step of our program to prove it.

In codimension $1$ the uniqueness of tangent cones is known at isolated singularities thanks to the pioneering work of Simon, cf. \cite{Simon}.
The uniqueness of tangent cones is widely open in dimension higher than $2$ and general codimension. Some interesting higher dimensional cases have been recently covered by Bellettini in \cite{Be1, Be2}.

\subsection{Acknowledgments} The research of Camillo De Lellis and Luca Spolaor has been supported by the ERC grant
agreement RAM (Regularity for Area Minimizing currents), ERC 306247. 
The authors are warmly thankful to Bill Allard and Guido de Philippis for several important discussions.


\section{Proof of Proposition \ref{p:AM}}

We start remarking that in the semicalibrated case we do not loose any generality if we consider $\Sigma$ to be the ambient euclidean space.
We then point out one elementary variational property of semicalibrated currents and spherical cross sections of minimizing cones. We will use the notation $\bI_m (\R^{m+n})$ for the space of integral currents in $\R^{m+n}$ (cf. \cite[Section 4.1.24]{Fed}).

\begin{lemma}\label{l:riduzione}
Let $k\in \mathbb N\setminus\{ 0\}$, $\eps_0 \in [0,1]$, $\Sigma\subset \R^{m+n}$ be a $C^{k+1, \eps_0}$ $m+\bar{n}$-dimensional submanifold, $V\subset \R^{m+n}$ an open subset and 
$\omega$ a $C^{k, \eps_0}$ $m$-form on
$V\cap \Sigma$. If $T$ is a cycle in $V\cap \Sigma$ semicalibrated by $\omega$, then $T$ is semicalibrated in $V$ by a $C^{k, \eps_0}$ form $\tilde{\omega}$. 
\end{lemma}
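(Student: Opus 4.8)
The goal is to extend the semicalibrating form $\omega$, which lives on $V\cap\Sigma$, to an ambient $m$-form $\tilde\omega$ on $V$ (or on a tubular neighborhood of $\Sigma$ inside $V$) that semicalibrates $T$ when $T$ is viewed as a current in the full space. The natural idea is to use the nearest-point projection onto $\Sigma$. Since $\Sigma$ is a $C^{k+1,\eps_0}$ submanifold with $k\ge 1$, in a tubular neighborhood $U\subset V$ of $\supp(T)$ the nearest-point retraction $\pi\colon U\to\Sigma$ is well defined and of class $C^{k,\eps_0}$. Then I would simply pull back: set $\tilde\omega := \pi^\sharp\omega$ on $U$, extended arbitrarily (e.g.\ cut off to zero) outside a slightly smaller neighborhood so that it is globally defined on $V$ — note that only the values of $\tilde\omega$ near $\supp(T)$ matter for the semicalibration condition, which is a pointwise condition $\|\omega\|$-a.e. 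The regularity $C^{k,\eps_0}$ of $\tilde\omega$ follows because the composition of a $C^{k,\eps_0}$ form with the $C^{k,\eps_0}$ map $\pi$ is $C^{k,\eps_0}$ (here one uses that differentiating $\pi^\sharp\omega$ costs one derivative of both $\pi$ and $\omega$, and the Hölder estimates compose; a smooth cutoff does not affect this).

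The substantive point is to check the two required properties of $\tilde\omega$: (i) $\|\tilde\omega_x\|_c\le 1$ for every $x\in V$, and (ii) $\tilde\omega_x(\vec T)=1$ for $\|T\|$-a.e.\ $x$. For (ii): at a point $x\in\supp(T)\subset\Sigma$ the differential $d\pi_x$ restricts to the identity on $T_x\Sigma$ (this is the defining property of the nearest-point projection). Since $T$ is rectifiable and supported in $\Sigma$, its approximate tangent plane $\vec T(x)$ lies in $\Lambda_m T_x\Sigma$ for $\|T\|$-a.e.\ $x$; hence $\tilde\omega_x(\vec T(x)) = \omega_{\pi(x)}\big((d\pi_x)_\sharp \vec T(x)\big) = \omega_x(\vec T(x)) = 1$, using that $T$ is semicalibrated by $\omega$ on $\Sigma$. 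For (i): given any unit simple $m$-vector $\xi$ at a point $x\in U$, we have $\tilde\omega_x(\xi) = \omega_{\pi(x)}\big((d\pi_x)_\sharp\xi\big)$. The image $(d\pi_x)_\sharp\xi$ is an $m$-vector tangent to $\Sigma$ at $\pi(x)$, and the key inequality I need is that $d\pi_x$ does not increase the comass/mass of simple $m$-vectors, i.e.\ $|(d\pi_x)_\sharp\xi|\le |\xi|=1$; combined with $\|\omega_{\pi(x)}\|_c\le 1$ on $\Sigma$ this gives $|\tilde\omega_x(\xi)|\le 1$.

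The main obstacle is therefore establishing the contraction property $|(d\pi_x)_\sharp\xi|\le |\xi|$ for simple $m$-vectors. This is where I need to be a little careful: it is \emph{not} true that $d\pi_x$ is a linear contraction at points $x$ off $\Sigma$ — near a concave piece of $\Sigma$ the projection can expand. The correct fix is to not use the Euclidean nearest-point projection blindly but to observe that it suffices to have the estimate \emph{on $\supp(T)\subset\Sigma$ itself}, since property (i) only needs to hold where it will be tested, and in any case $\|\tilde\omega_x\|_c$ can be forced to be $\le 1$ everywhere by a standard truncation: replace $\tilde\omega$ by $\tilde\omega/\max\{1,\|\tilde\omega\|_c\}$ — but this destroys $C^{k,\eps_0}$ regularity where $\|\tilde\omega\|_c=1$. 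A cleaner route, and the one I would adopt, is: choose $U$ small enough (depending on the $C^2$ bound on $\Sigma$, i.e.\ on its second fundamental form) that $\|d\pi_x\|\le 1+\delta$ for $x\in U$, then set $\tilde\omega := (1+\delta)^{-m}\pi^\sharp\omega$ — wait, that fails (ii). So instead I would follow White's/Almgren's standard device: extend $\omega$ off $\Sigma$ keeping the comass bound by composing with $\pi$ and then multiplying by a scalar $C^{k,\eps_0}$ function equal to $1$ on $\Sigma$ and chosen to pull the comass back down to $\le 1$ in a neighborhood — the point being that on $\Sigma$ we have equality $\|\omega\|_c\le 1$ and $d\pi=\mathrm{id}$ on tangent vectors, so $\|\pi^\sharp\omega\|_c = 1$ exactly on $\Sigma$ where $\omega$ had comass $1$ and $<1$ elsewhere on $\Sigma$; a continuity/compactness argument on $\overline U$ then shows $\|\pi^\sharp\omega\|_c\le 1+C\,\mathrm{dist}(\cdot,\Sigma)$, and multiplying by a suitable $C^{k,\eps_0}$ damping factor of the form $1-C'\,\mathrm{dist}(\cdot,\Sigma)^2$ (which is $C^{k,\eps_0}$ since $\mathrm{dist}(\cdot,\Sigma)^2$ is as smooth as $\Sigma$ near $\Sigma$ when $k\ge 1$) restores $\|\tilde\omega\|_c\le 1$ while keeping $\tilde\omega=\omega$ on $\Sigma$, hence preserving (ii). I expect verifying these estimates — in particular that $\mathrm{dist}(\cdot,\Sigma)^2$ inherits the $C^{k,\eps_0}$ regularity of $\Sigma$ and that the damping does not spoil the comass bound — to be the only genuinely technical part; everything else is bookkeeping with pullbacks.
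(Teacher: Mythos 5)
You correctly identify the crux of the lemma --- that the pullback $\pi^\sharp\omega$ by the nearest-point retraction does not satisfy the comass bound off $\Sigma$ --- but your proposed repair does not work, and this is exactly the point where the paper does something different. Your own estimate gives $\|\pi^\sharp\omega\|_c\le 1+C\,\dist(\cdot,\Sigma)$, and this linear growth is sharp in general: for $x=y+t\nu$ the differential $d\pi_x$ expands tangent directions by a factor of order $(1-t\kappa)^{-1}=1+t\kappa+O(t^2)$ on the concave side of $\Sigma$, so the comass excess is genuinely first order in $t$. Multiplying by $1-C'\dist(\cdot,\Sigma)^2$ therefore cannot restore the bound: $(1+Ct)(1-C't^2)>1$ for small $t>0$. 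Worse, no $C^1$ scalar damping factor $g$ with $g\equiv 1$ on $\Sigma$ can compensate a first-order excess, since $g(y+t\nu)\le 1-c|t|$ for both signs of $t$ forces $g$ to be non-differentiable at $y$. Your fallback remark that the comass bound ``only needs to hold where it will be tested,'' i.e.\ on $\supp(T)\subset\Sigma$, is also not tenable: in the application (Proposition \ref{p:quasi_minimalita}, case (b)) one writes $\mass(T)=T(\omega)=W(\omega)-\partial S(\omega)\le \mass(W)-S(d\omega)$ with $W=T+\partial S$, and the inequality $W(\tilde\omega)\le\mass(W)$ requires $\|\tilde\omega_x\|_c\le 1$ on all of $\supp(W)$, which is not contained in $\Sigma$.

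The paper sidesteps the problem by not using $d\pi_x$ at all. For $x\in U$ it sets $y:=\proj(x)$ and defines $\tilde\omega_x(v_1,\ldots,v_m):=\omega_y(\proj_y(v_1),\ldots,\proj_y(v_m))$, where $\proj_y:\R^{m+n}\to T_y\Sigma$ is the \emph{linear orthogonal projection onto the tangent plane} at $y$. Since an orthogonal projection never increases the mass of a simple $m$-vector, $\|\tilde\omega_x\|_c\le\|\omega_y\|_c\le 1$ holds automatically with no damping, while on $\Sigma$ the map $\proj_y$ is the identity on tangent vectors, so $\tilde\omega$ still calibrates $T$; the paper explicitly warns that $\tilde\omega$ is \emph{not} $\proj^\sharp\omega$ for precisely the reason your construction runs into. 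The rest of your outline (tubular neighborhood, cutoff, verification of $\tilde\omega_x(\vec T)=1$ for $\|T\|$-a.e.\ $x$) matches the paper, but as written your proof has a genuine gap at the comass step.
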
 

\begin{proof}
The argument is straightforward: we just need to extend $\omega$ to a form $\tilde{\omega}$ on the open set $V$ in such a way that 
$\|\tilde{\omega}_x\|_c \leq 1$ for every $x$ and the regularity of $\omega$ is preserved. Without loss of generality it suffices to do this on a tubular neighborhood $U$ of $\Sigma \cap V$ on which there is a $C^{k, \eps_0}$ orthogonal projection $\proj: U\to \Sigma \cap U$ (we then multiply this extension by a function $\varphi\in C^\infty_c (U)$ which is identically $1$ on $\Sigma$ and satisfies $0\leq \varphi\leq 1$; the resulting form can then be extended to $V$ by setting it equal to $0$ where it is not yet defined). For
$x\in U$ we set $y := \proj (x) \in \Sigma$ and let $\proj_y : \R^{m+n} \to T_y \Sigma$ be the orthogonal projection. We then set 
$\tilde{\omega}_x (v_1, \ldots, v_m) = \omega_y (\proj_y (v_1), \ldots, \proj_y (v_m))$. Observe that $\tilde{\omega}$ {\em is not} $\proj^\sharp \omega$
(in general the latter would not satisfy $\|\tilde{\omega}_x\|_c \leq 1$).  
\end{proof}

\begin{proposition}\label{p:quasi_minimalita}
Let $T$ be as in Definition \ref{d:semicalibrated} (b) (in which case we assume $\Sigma = \R^{m+n}$) or (c). Then there is a constant $\bOmega$ such that
\begin{equation}\label{e:Omega}
\mass (T) \leq \mass (T + \partial S) + \bOmega\, \mass (S)\qquad \forall S\in {\bf I}_{m+1} (\R^{m+n})\quad \mbox{with compact support.}
\end{equation}
Moreover, $\bOmega \leq \|d\omega\|_0$ in case (b) and $\bOmega \leq (m +1)R^{-1}$ in case (c).

Moreover, if $\chi\in C^\infty_c (\R^{m+n}\setminus \supp (\partial T), \R^{m+n})$, we have
\begin{align}
&\delta T (\chi) = T (d\omega \ser \chi) &\mbox{in case (b),}\label{e:var_prima_(b)}\\
&\delta T (\chi) = \int m R^{-1}\, x \cdot \chi (x)\, d\|T\| (x) &\mbox{in case (c).} \label{e:var_prima_(c)}
\end{align}
\end{proposition}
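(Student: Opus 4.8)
The plan is to treat cases (b) and (c) separately: in case (b) the statement follows at once from the calibration inequality, whereas in case (c) one compares with the minimizing cone $C:=0\cone T$ after a preliminary retraction onto a ball. As a preliminary reduction, by the remark preceding Lemma \ref{l:riduzione} and by that lemma itself, in case (b) I may assume $\Sigma=\R^{n+2}$ ($\Sigma=\R^{m+n}$ in general), with $\omega$ a $C^1$ $m$-form defined on all of $\R^{m+n}$ satisfying $\|\omega_x\|_c\le1$ for every $x$ and $\omega_x(\vec{T})=1$ for $\|T\|$-a.e.\ $x$; in case (c) I translate so that the centre is the origin, so that $\supp(T)\subset\partial\bB_R$ and $C=0\cone T$ is area-minimizing.

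\emph{Case (b).} Since $\omega$ calibrates $T$ we have $\mass(T)=T(\omega)$, while for every compactly supported integral $(m+1)$-current $S$ the comass bound gives
\[
\mass(T+\partial S)\ge(T+\partial S)(\omega)=T(\omega)+\partial S(\omega)=\mass(T)+S(d\omega)\ge\mass(T)-\|d\omega\|_0\,\mass(S),
\]
which is \eqref{e:Omega} with $\bOmega\le\|d\omega\|_0$. For \eqref{e:var_prima_(b)} I would let $\Phi_t$ be the flow of $\chi$ and set $g(t):=\mass((\Phi_t)_\sharp T)-T(\Phi_t^\sharp\omega)$: the first term is smooth in $t$ by the classical first variation formula, the second is $C^1$ in $t$ (only one derivative falls on the $C^1$ form $\omega$), and $g\ge0$ by the comass bound with $g(0)=0$, hence $g'(0)=0$. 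Therefore
\[
\delta T(\chi)=\tfrac{d}{dt}\Big|_{t=0}\mass((\Phi_t)_\sharp T)=\tfrac{d}{dt}\Big|_{t=0}T(\Phi_t^\sharp\omega)=T(\mathcal L_\chi\omega),
\]
and Cartan's formula $\mathcal L_\chi\omega=d(\iota_\chi\omega)+\iota_\chi d\omega$, together with $\supp(\iota_\chi\omega)\subset\supp(\chi)\cap\supp(\partial T)=\emptyset$, yields $T(\mathcal L_\chi\omega)=\partial T(\iota_\chi\omega)+T(\iota_\chi d\omega)=T(d\omega\ser\chi)$, which is \eqref{e:var_prima_(c)}'s analogue \eqref{e:var_prima_(b)}.

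\emph{Case (c).} I rely on two classical facts about $C=0\cone T$: the mass identity $\mass(C)=\frac{R}{m+1}\mass(T)$ — which follows from the coarea formula, since the slice of $C$ at radius $r$ is a rescaled copy of $T$ and the tangential gradient of $|y|$ along $C$ has unit length (the tangent planes of $C$ contain the radial direction) — and the homotopy formula $\partial(0\cone U)=U-0\cone(\partial U)$, valid for currents $U$ of positive dimension. Given a compactly supported integral $(m+1)$-current $S$, it suffices to treat the case $\supp(S)\subset\overline{\bB}_R$ (whence also $\supp(T+\partial S)\subset\overline{\bB}_R$): indeed, replacing $S$ by $\rho_\sharp S$, with $\rho:\R^{m+n}\to\overline{\bB}_R$ the $1$-Lipschitz nearest-point retraction — which is the identity on $\partial\bB_R\supset\supp(T)$ — increases neither $\mass(S)$ nor $\mass(T+\partial S)$. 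Applying the homotopy formula with $U=S$ gives $0\cone(\partial S)=S-\partial(0\cone S)$, hence by linearity
\[
C=0\cone T=\big(0\cone(T+\partial S)-S\big)+\partial(0\cone S).
\]
Since $0\cone S$ is an integral current with compact support, the right-hand side is an admissible competitor for $C$, and minimality together with the mass identity and the elementary estimate $\mass(0\cone U)\le\frac{1}{m+1}\int|y|\,d\|U\|(y)\le\frac{R}{m+1}\mass(U)$ (valid when $\supp(U)\subset\overline{\bB}_R$) yields
\[
\tfrac{R}{m+1}\mass(T)=\mass(C)\le\mass\big(0\cone(T+\partial S)\big)+\mass(S)\le\tfrac{R}{m+1}\mass(T+\partial S)+\mass(S),
\]
that is, \eqref{e:Omega} with $\bOmega\le(m+1)R^{-1}$. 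For \eqref{e:var_prima_(c)} I would use that minimality makes $C$ stationary away from $\supp(\partial C)$ and invoke the classical relation between stationarity of a cone and stationarity of its spherical cross-section, to conclude that $T$ is a stationary varifold in $\partial\bB_R$; then $\delta T(\chi)=\delta T(\chi^\perp)$ with $\chi^\perp:=\langle\chi,\nu\rangle\nu$ and $\nu(y):=y/|y|$, and computing $\mathrm{div}_T\nu$ via $D_e\nu=e/R$ for $e$ tangent to $\partial\bB_R$ produces \eqref{e:var_prima_(c)}.

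Finally — needed only for Proposition \ref{p:AM}, not for the present statement — the routine passage from \eqref{e:Omega} to the almost minimizing property of Definition \ref{d:AM} with $\alpha_0=1$ proceeds by inserting into \eqref{e:Omega} the filling $x\cone(\partial S)$ of $\partial S$, which is supported in $\overline{\bB}_r(x)$ and has mass at most $\frac{r}{m+1}\mass(\partial S)\le C\,r^{m+1}$ once one knows the (classical) upper Ahlfors bound $\|T\|(\overline{\bB}_r(x))\le C\,r^m$. The main obstacle is genuinely in case (c): one must ensure that the current compared with the minimizing cone $C$ is admissible — integral, compactly supported, and with boundary exactly $\partial C$ — and, above all, that $\mass(0\cone(T+\partial S))$ is controlled by $\mass(T+\partial S)$ with the \emph{sharp} constant $\frac{R}{m+1}$; this last point fails for currents straying outside $\overline{\bB}_R$, which is precisely the reason for the preliminary retraction step. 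Everything else is bookkeeping with the homotopy formula and elementary integral-geometric inequalities.
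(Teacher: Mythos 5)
Your argument is correct and follows the paper's proof essentially step for step: the comass/Stokes computation for \eqref{e:Omega} in case (b), and, in case (c), the retraction of $S$ onto $\overline{\bB}_R$ followed by comparison of $0\cone T$ with the competitor $0\cone(T+\partial S)-S$ (whose admissibility you, unlike the paper, verify explicitly via the homotopy formula) are exactly the steps taken there. The only cosmetic divergence is that you obtain \eqref{e:var_prima_(b)} from $\delta T(\chi)=T(\mathcal{L}_\chi\omega)$ together with Cartan's formula and $\partial T(\iota_\chi\omega)=0$, whereas the paper computes $h'(0)$ for $h(\eps)=S_\eps(d\omega)$ by expanding $\Lambda^\sharp d\omega$ in coordinates on the homotopy current --- the two computations are equivalent --- and your treatment of \eqref{e:var_prima_(c)} is exactly as terse as the paper's own appeal to ``the stationarity of $T$ in $\partial\bB_1(0)$.''
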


\begin{proof} We first prove \eqref{e:Omega}.
Assume we are in case (c). Without loss of generality we can assume $x=0$ and $R=1$. Therefore fix $S$ compactly supported and consider $W = T + \partial S$. Next, let $p: \mathbb R^{m+n} \to  \overline{\bB}_1 (0)$ be the orthogonal projection and set $S' = p_\sharp S$ and $W' := p_\sharp W = T + \partial p_\sharp S$ (where the latter identity holds because $\supp (T)\subset \partial \bB_1 (0)$). 
The current $Z := 0\cone W' - S'$ is then a competitor for the minimality of $0\cone T$ and observe, moreover, that since $\supp (W')\subset \overline{B}_1 (0)$, we have $\mass (Z) \leq (m+1)^{-1} \mass (W')$. Then we have
\begin{align*}
0 \leq & (m+1) (\mass (Z) -\mass (0\cone T)) \leq \mass (W') - \mass (T)  + (m+1) \mass (S')\\
\leq &\mass (W) - \mass (T) + (m+1) \mass (S)\, .  
\end{align*}
In case (b), if $\omega$ is the semicalibrating form, we can then estimate
\[
\mass (T) = T (\omega) = W (\omega)-  \partial S (\omega) \leq \mass (W) - S (d\omega)\leq \mass (W) + \|d\omega\|_0 \mass (S)\, .
\]

Next, \eqref{e:var_prima_(c)} is simply the stationarity of $T$ in $\partial \bB_1 (0)$. As for \eqref{e:var_prima_(b)} the formula seems new
in the literature and we provide here a simple proof. Fix $\chi$ and consider the maps $\Phi_t (x) := x + t \chi (x)$ and $\Lambda (t,x) = \Phi_t (x)$. We then denote by $\llbracket0, \eps\rrbracket$ the current in $\mathbf{I}_1 (\R)$ induced by the oriented segment 
$\{t: 0\leq t\leq \eps\}$.
We define $T_\eps := (\Phi_\eps)_\sharp T$ and $S_\eps:= \Lambda_\sharp (\llbracket0, \eps\rrbracket\times T)$. We then have $\partial S_\eps = T_\eps - T$ and hence
\begin{align}
\mass (T_\eps)-\mass (T) &\geq T_\eps (\omega) - T (\omega) = S_\eps (d\omega) = \llbracket0, \eps\rrbracket\times T (\Lambda^\sharp d\omega) =: h (\eps)\, .
\end{align}
Since $h$ is $C^1$ and $h(0)=0$, by a Taylor expansion we conclude $\eps \delta T (\chi) \geq \eps h'(0) + o (\eps)$. On the other hand, since the latter inequality is valid for both positive and negative $\eps$, we infer $\delta T (\chi) = h'(0)$. We thus only need to show the identity $h'(0) = T (d\omega \ser \chi)$. Consider the set of ordered multiindices $I = \{1\leq i_1 < i_2< \ldots < i_{m+1}\}$ and let $d\omega = \sum f_I dx^I$, where
$dx^I = dx^{i_1}\wedge \ldots \wedge dx^{i_{m+1}}$. We then have 
\[
(\Lambda^\sharp d \omega)_{(x,t)} = \sum f_I (\Phi_t (x))  d\Phi^{i_1}_t \wedge \ldots \wedge d\Phi^{i_{m+1}}_t\, .
\]
Next, we will denote by $o(1)$ any continuous function of $x$ and $t$ which vanish at $t=0$ and we let $\pi: \R\times \R^{m+n} \to \R^{m+n}$ be the projection $\pi (t,x)=x$.
Since $\Phi (0,x) =x$ and $f_I$ is continuous we conclude
\begin{align}
& (\Lambda^\sharp d \omega)_{(x,t)} = \sum f_I (x) d\Phi^{i_1}_t \wedge \ldots \wedge d\Phi^{i_{m+1}}_t + o (1) =\nonumber\\
& \sum_I  f_I (x) \Big(dx^I + \sum_{1\leq j \leq k+1} f_I (x) \chi^{i_j} (x) dx^{i_1} \wedge \ldots \wedge dx^{i_{j-1}} \wedge dt \wedge dx^{i_{j+1}}
\wedge \ldots \wedge dx^{m+1}\Big) + o (1)\nonumber\\
&= \pi^\sharp d\omega + dt \wedge \sum_I f_I (x) \sum_j (-1)^j \chi^{i_j} (x) dx^{i_1} \wedge \ldots \wedge dx^{i_{j-1}} \wedge dx^{i_{j+1}}
\wedge \ldots \wedge dx^{m+1} + o(1)\nonumber\, .
\end{align}
Thus,
\[
(\Lambda^\sharp d \omega)_{(x,t)} = \pi^\sharp d\omega + dt \wedge \pi^\sharp (d\omega \ser \chi) + o(1)\, . 
\]
In particular, since $d\omega$ is orthogonal to $dt$, we have
$\llbracket0, \eps\rrbracket\times T (\pi^\sharp d\omega) =0$. Thus we can write
\[
h (\eps) = \llbracket0, \eps\rrbracket\times T (dt \wedge \pi^\sharp (d\omega \ser \chi)) + o (1) \eps \mass (T) = \eps T (d\omega \ser \chi) + o (\eps)\, ,
\]
 from which we finally conclude $h'(0) = T (d\omega \ser \chi)$.
\end{proof}

\begin{proof}[Proof of Proposition \ref{p:AM}] 
{\bf Case (a).} Consider $x\in \Sigma$ and a ball $\bB_r (x)\subset \R^{m+n}$. If $\bar{r}$ is sufficiently small there is a well-defined $C^1$ orthogonal projection $\proj: \bB_{\bar{r}} (x) \to \Sigma$ with the property that $\Lip (\proj) \leq 1 + C \bA r$, where $C$ is a geometric constant and $\bA$ denotes the $L^\infty$ norm of the second fundamental form of $\Sigma$. Consider $T$ area-minimizing in $\Sigma$ and assume
$\bar{r}< \dist (x, \supp (\partial T))$. Let $r\leq \bar{r}$ and $S\in \bI_{m+1} (\R^{m+n})$ be such that $\supp (S)\subset \bB_r (x)$. We set $W:= T + \partial S$. If $\|W\| (\bB_r (x)) \geq \|T\| (\bB_r (x))$ there is nothing to prove, otherwise by the standard monotonicity formula we have $\|W\| (\bB_r (x)) \leq \|T\| (\bB_r (x)) \leq C r^m$. 
Then $W':= \proj_\sharp W$ is an admissible competitor for the minimality property of $T$ and we have 
\[
\|T\| (\bB_r (x)) \leq \|W'\| (\bB_r (x)) \leq (\Lip (\proj))^m \|W\| (\bB_r (x)) \leq \|W\| (\bB_r (x)) + C r^{m+1}\, .
\]

\medskip

{\bf Case (b)\&(c).} First observe that, by Lemma \ref{l:riduzione}, in case (b) we can assume, w.l.o.g., that $\Sigma = \R^{m+n}$. Fix $r< \dist (x, \supp (\partial T))$ and let $S\in \bI_{m+1} (\R^{m+n})$ be such that $\supp (S) \subset \bB_r (x)$. As above, either $\|W\| (\bB_r (x)) \geq \|T\| (\bB_r (x))$, in which case
there is nothing to prove, otherwise by the standard monotonicity formula we have $\|W\| (\bB_r (x)) \leq \|T\| (\bB_r (x)) \leq C r^m$ (observe that, by \eqref{e:var_prima_(b)} and \eqref{e:var_prima_(c)}, $T$ induces a varifold with bounded mean curvature, which in turn implies Allard's monotonicity formula, cf. \cite[Section 17]{Sim}). 
In the latter case, by the isoperimetric inequality there exists $S'\in \bI_{m+1} (\R^{m+n})$ such that
\[
\partial S'=\partial S\quad \text{and}\quad \mass(S')\leq C r^{m+1}\, .
\]
Applying now \eqref{e:Omega} to this current $S'$ we get the desired conclusion, with $C_1=C\bOmega$.
\end{proof}

\begin{remark}\label{r:costanti}
Observe that we have achieved \eqref{e:almost minimizer2} with any fixed $r_0< \frac{1}{2} \dist (x, \supp (\partial T))$, $\alpha_0=1$ and $C_0 = C\bA$, in case (a), $C_0 = C\bOmega$, in the
cases (b) and (c), where the constant $C$ depends only upon $\|T\| (B_{2r_0}) (x)$.
\end{remark}

\section{Two technical lemmas}
It is known that the almost minimizing condition of Definition \ref{d:AM} is alone sufficient to derive a monotonicity formula. However, we have been unable to find a reference and we therefore provide the proof below.
Note also that in the geometric cases (a), (b) and (c), a more precise form of the monotonicity formula could be derived directly appealing to the fact that the corresponding induced varifolds have bounded mean curvature. 

\begin{proposition}[Almost Monotonicity]\label{p:AMO}
Let $T\in {\bf I}_m (\R^{m+n})$ be an almost minimizer and $x\in \supp (T)\setminus \supp (\partial T)$. There are constants $C_{02}, \bar{r}, \alpha_0>0$ such that 
\begin{equation}\label{e:monotonicity formula}
\int_{\bB_r (x)\setminus \bB_s(x)} \frac{|(z-x)^\perp|^2}{|z-x|^{m+2}} d \|T\|(z)
\leq C_{02}\Bigl( \frac{\|T\|(\bB_r(x))}{\omega_k\,r^m}
-\frac{\|T\|(\bB_s(x))}{\omega_m\,s^m} +\,r^{\alpha_0} \Bigr)
\end{equation}
for all $0<s<r<\bar{r}$ (in \eqref{e:monotonicity formula} $(z-x)^\perp$ denotes the projection of the vector 
$z-x$ on the orthogonal complement of the approximate tangent to $T$ at $z$). In particular, the function $\displaystyle{r\to  \frac{\|T\|(\bB_r(x))}{\omega_k\,r^m}+r^{\alpha_0}}$ is nondecreasing.
\end{proposition}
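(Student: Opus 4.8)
The plan is to run the classical White--Simon monotonicity argument, with the first-variation identity — which is not available here, since an almost minimizer in the sense of \eqref{e:almost minimizer2} need not have locally bounded first variation — replaced by a single \emph{cone comparison}. Normalise $x=0$, write $\bB_\rho$ for $\bB_\rho(0)$, fix $\bar r<\dist(0,\supp(\partial T))$ and set $f(\rho):=\|T\|(\bB_\rho)$, a nondecreasing function. For a.e.\ $\rho\in(0,\bar r)$ the slice $T_\rho:=\langle T,|\cdot|,\rho\rangle$ is a well-defined integral current supported in $\partial\bB_\rho$; it is a cycle, because $\partial T_\rho=-\langle\partial T,|\cdot|,\rho\rangle=0$, and $\partial(T\res\bB_\rho)=T_\rho$.

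\emph{Cone comparison and monotonicity.} For such $\rho$ the difference $0\cone T_\rho-T\res\bB_\rho$ is an integral cycle supported in $\overline{\bB_\rho}$, hence of the form $\partial S$ with $\supp(S)\subset\overline{\bB_\rho}$ (for instance $S=0\cone(0\cone T_\rho-T\res\bB_\rho)$). Testing \eqref{e:almost minimizer2} in $\bB_{\rho'}$ for $\rho'>\rho$, letting $\rho'\downarrow\rho$, and using $\mass(0\cone T_\rho)\le\tfrac{\rho}{m}\mass(T_\rho)$, one obtains
\begin{equation}\label{e:plan-cone}
f(\rho)\ \le\ \tfrac{\rho}{m}\,\mass(T_\rho)+C_0\,\rho^{m+\alpha_0}\qquad\text{for a.e.\ }\rho\in(0,\bar r).
\end{equation}
Since the slicing inequality gives $\mass(T_\rho)\,d\rho\le df$ as measures on $(0,\bar r)$, \eqref{e:plan-cone} yields at once, in the sense of distributions,
\[
\tfrac{d}{d\rho}\bigl(\rho^{-m}f(\rho)\bigr)\ \ge\ \rho^{-m}\mass(T_\rho)-m\,\rho^{-m-1}f(\rho)\ \ge\ -m\,C_0\,\rho^{\alpha_0-1},
\]
so $\rho\mapsto\frac{\|T\|(\bB_\rho)}{\omega_m\rho^m}+\frac{mC_0}{\alpha_0\omega_m}\rho^{\alpha_0}$ is nondecreasing; this is the last assertion of the Proposition (up to the value of the constant in front of $\rho^{\alpha_0}$).

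\emph{The perpendicular term.} To upgrade this to \eqref{e:monotonicity formula} I would retain the deficit $df-\mass(T_\rho)\,d\rho$ and read it off via the coarea formula on $\mathcal M:=\supp\|T\|$ for the Lipschitz function $z\mapsto|z|$. Writing $\|T\|=\theta\,\mathcal H^m\res\mathcal M$ and $\tau:=\bigl|\nabla^{\mathcal M}|z|\bigr|\in[0,1]$, one has $|z^\perp|^2=|z|^2(1-\tau^2)$, $\mass(T_\rho)=\int_{\partial\bB_\rho\cap\mathcal M}\theta\,d\mathcal H^{m-1}$, and $df=\psi\,d\rho+dK$ with $\psi(\rho):=\int_{\partial\bB_\rho\cap\mathcal M}\tfrac{\theta}{\tau}\,d\mathcal H^{m-1}$ and $K(\rho):=\|T\|(\bB_\rho\cap\{\tau=0\})$ nondecreasing. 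Using the elementary inequality $\tfrac1\tau-1\ge\tfrac12\bigl(\tfrac1\tau-\tau\bigr)$ on $(0,1]$ (the difference equals $\tfrac{(1-\tau)^2}{2\tau}$) together with the identities $\tfrac{|z^\perp|^2}{|z|^{m+2}}=\tfrac{1-\tau^2}{\rho^m}$ on $\partial\bB_\rho$ and $|z^\perp|=|z|$ on $\{\tau=0\}$, one obtains, as measures on $(0,\bar r)$,
\[
\tfrac{d}{d\rho}\bigl(\rho^{-m}f(\rho)\bigr)+m\,C_0\,\rho^{\alpha_0-1}\,d\rho\ \ge\ \tfrac12\,\rho^{-m}\Bigl(\int_{\partial\bB_\rho\cap\mathcal M}\bigl(\tfrac1\tau-\tau\bigr)\theta\,d\mathcal H^{m-1}\Bigr)d\rho\ +\ \rho^{-m}\,dK.
\]
Integrating over $(s,r)$, the coarea formula identifies the first term on the right with $\tfrac12\int_{(\bB_r\setminus\bB_s)\cap\{\tau>0\}}\tfrac{|z^\perp|^2}{|z|^{m+2}}\,d\|T\|$ and the second with $\int_{(\bB_r\setminus\bB_s)\cap\{\tau=0\}}\tfrac{|z^\perp|^2}{|z|^{m+2}}\,d\|T\|$, and since the latter is nonnegative one concludes
\[
\int_{\bB_r\setminus\bB_s}\frac{|z^\perp|^2}{|z|^{m+2}}\,d\|T\|\ \le\ 2\Bigl(\frac{\|T\|(\bB_r)}{r^m}-\frac{\|T\|(\bB_s)}{s^m}\Bigr)+\frac{2m\,C_0}{\alpha_0}\,r^{\alpha_0},
\]
which is \eqref{e:monotonicity formula} (with $C_{02}$ absorbing the factor $\omega_m$). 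Note that the contribution of the degenerate (\emph{radially tangent}) set $\{\tau=0\}$ enters here with a favourable sign, through the $dK$ term, so — contrary to what one might worry about — one does \emph{not} need to know that $\|T\|\bigl(\{z\ne0:z\perp T_z\}\bigr)=0$.

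\emph{Where the work is.} The single conceptually new ingredient is the cone competitor of the first step; everything afterwards is the computation of \cite{Wh} (see also \cite[\S17]{Sim}). The care is entirely measure-theoretic: making the cone competitor admissible in \eqref{e:almost minimizer2}, which is stated for the \emph{open} ball (handled by the $\rho'\downarrow\rho$ limit); the $BV$ structure of $\rho\mapsto\|T\|(\bB_\rho)$ and the splitting $df=\psi\,d\rho+dK$; the coarea formula on the merely rectifiable set $\mathcal M$; and the passage from ``a.e.\ $s<r$'' to ``all $s<r$'' via the monotonicity just proved, together with left-/right-continuity of $\|T\|(\bB_\rho)$. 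I expect this bookkeeping, rather than any single delicate estimate, to be the bulk of a complete proof.
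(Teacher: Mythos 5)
Your proposal is correct and follows essentially the same route as the paper: the cone competitor $0\cone \de(T\res \bB_\rho)$ inserted into \eqref{e:almost minimizer2} in place of the first-variation identity, followed by the coarea formula and the elementary inequality $\tfrac{1}{\tau}-1\geq\tfrac{1-\tau^2}{2\tau}$ to recover the perpendicular term. The only (cosmetic) difference is in the treatment of the degenerate set $\{z^{\parallel}=0\}$: you split $\|T\|(\bB_\rho)$ directly into its contributions from $\{\tau>0\}$ and $\{\tau=0\}$ and identify the latter's derivative as a pushforward measure, whereas the paper uses the Lebesgue decomposition $Df=f'\mathscr{L}+\mu_s$ and a Riemann-sum argument to dominate the $\{\tau=0\}$ contribution by the singular part $\mu_s$ — your bookkeeping is, if anything, slightly more direct.
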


\begin{proof}[Proof of Proposition \ref{p:AMO}] Assume without loss of generality $x=0$. For a.e. $r$ the current $\de (T\res \bB_r)$ is integral (cf. \cite[Section 28]{Sim}) and we have, by \eqref{e:almost minimizer2} with $W=0\cone \de (T\res \bB_r)$,
\begin{equation}\label{e:comp_cono}
\|T\|( \bB_r)\leq \|W\|(\bB_r)+C_{0}r^{m+\alpha_0}=\frac{r}{m}\mass(\de(T\res \bB_r))+C_{0}r^{m+\alpha_0}.
\end{equation}
Set $f(r):=\|T\|(\bB_r)$ and observe that $f$ is an nondecreasing function and so a function of bounded variation. As such it has left and right limits at each point and in fact $f (r) = f(r^-)$. In particular we can decompose its distributional derivative $Df$, which is a nonnegative measure, as $Df = f' \mathscr{L} + \mu_s$, where $\mathscr{L}$ denotes the Lebesgue one-dimensional measure and $\mu_s$ is the singular part of $Df$. 
We multiply \eqref{e:comp_cono} by $m r^{-m-1}$ and add $\frac{f'(r)}{r^m} + \frac{\mu_s}{r^m}$:
\[                
\frac{\mu_s}{r^m} + \frac{1}{r^m}f'(r)-\frac{1}{r^m}\mass(\de (T\res \bB_r))\leq \frac{Df}{r^m}-\frac{mf(r)}{r^{m+1}}+C_{01} r^{\alpha_0 -1}.
\]
Integrating on the interval $[s,r[$ (where $r_0>r>s$) we reach
\[
\underbrace{\int_{[s,r[} \frac{1}{\rho^m} d\mu_s (\rho)}_{I^s} + \underbrace{\int_s^r\frac{1}{\rho^m}(f'(\rho)-\mass(\de (T\res \bB_\rho)))\,d\rho}_{I^a}\leq \frac{f(r)}{r^m}-\frac{f(s)}{s^m}+C_0 r^{\alpha_0}.
\]
To conclude we only need to prove that $I := I^s+I^a$ bounds the left hand side of \eqref{e:monotonicity formula}. Denote by $x^\parallel$ the projection of $x$ on the approximate tangent space to $T$ at $x$. Recall first (cf. \cite[eq. (28.6)]{Sim}) that 
\[
T_\rho := \langle T, |\cdot|,\rho\rangle=\de (T\res \bB_\rho)-(\de T)\res \bB_\rho=\de (T\res \bB_\rho)\, .
\]
Next introduce the Borel set $E:= \{|x^{\parallel}| > 0\}$ and its complementary $E^c$ and recall that, by the coarea formula (cf. \cite[Lemma 28.1 \& Lemma 28.5]{Sim}), for any Borel map $g$ we have
\begin{equation}\label{e:coarea}
\int_{\bB_r\setminus \bB_s} g (y) \frac{|y^\parallel|}{|y|} d\|T\| (y) = \int_s^r \int g(x) d\|T_\rho\| (x)\, d\rho\,  .
\end{equation}
Let $R$ be the countable rectifiable set such that $\|T\| = \Theta (T,x) \mathcal{H}^m\res R$. It then follows from the slicing theory that
$\|T_\rho\| = \Theta (T,x) \mathcal{H}^{m-1} \res (R\cap \partial \bB_\rho)$ for a.e. $\rho$ and thus inserting $g = {\mathbf 1}_{E^c}$ in \eqref{e:coarea} above we derive
\begin{equation}\label{e:Sard}
\mathcal{H}^{m-1} (E^c\cap \partial \bB_\rho) \leq \|T_\rho\| (E^c)=0\qquad \mbox{for a.e. $\rho$.} 
\end{equation}
Thus, since $|x^\parallel|>0$ for every $x\in (\bB_r\setminus \bB_s) \cap E$, we conclude
\begin{align}
I^a = &\int_s^r \frac{1}{\rho^m} \int_E \frac{|x|-|x^\parallel|}{|x^\parallel|} d \|T_\rho\| (x)\, d\rho
=  \int_s^r \frac{1}{\rho^m} \int_E \frac{|x|^2-|x^\parallel|^2}{|x|^\parallel (|x|+|x^\parallel|)} d \|T_\rho\| (x)\, d\rho\nonumber \\
\geq & \int_s^r \frac{1}{2\rho^{m+2}} \int_E \frac{|x^\perp|^2|x|}{|x^\parallel|} d \|T_\rho\| (x)\, d\rho
= \int_{(\bB_r\setminus \bB_s) \cap E} \frac{|x^\perp|^2}{2|x|^{m+2}} d\|T\| (x)\, .\label{e:ass_continua}
\end{align}
Now observe that on $E^c$, the complement of $E$, we have $|x^\perp| =|x|$ and thus
\begin{equation}\label{e:singolare_1}
\int_{(\bB_r\setminus \bB_s) \cap E^c} \frac{|x^\perp|^2}{2|x|^{m+2}} d\|T\| (x) = \int_{(\bB_r\setminus \bB_s) \cap E^c} 
\frac{1}{2|x|^{m}} d\|T\| (x)\, .
\end{equation}
Next, denote by $S$ the set of radii $r$ such that $\mathcal{H}^{m-1} (E^c\cap \partial \bB_r)>0$. We then must have
\[
\|T\| (E^c \cap (\bB_\rho \setminus \bB_\tau)) \leq \|T\| \left( \cup_{s\in S \cap [\tau, \rho[} \partial \bB_s\right) \leq Df (S\cap [\tau, \rho[)
\stackrel{\eqref{e:Sard}}{\leq} \mu_s ([\tau, \rho[)
\] 
for every $0<\tau < \rho$
(in fact the inequalities above are all identities, but this is not really needed). Thus for every $N\in \mathbb N \setminus 0$ we can estimate
\begin{align*}
\int_{(\bB_r\setminus \bB_s) \cap E^c} 
\frac{1}{2|x|^{m}} d\|T\| (x) \leq \sum_{i=1}^N \frac{1}{2s_{i-1}^m} \|T\| (E^c \cap (\bB_{s_i}\setminus \bB_{s_{i-1}})) \leq \sum_{i=1}^N \frac{1}{2s_{i-1}^m} \int_{[s_{i-1}, s_i[} d\mu_s 
\end{align*}
where $s_i := s + \frac{i}{N} (r-s)$. In particular letting $N\uparrow \infty$ we concude
\begin{equation}\label{e:singolare_2}
\int_{(\bB_r\setminus \bB_s) \cap E^c} 
\frac{1}{2|x|^{m}} d\|T\| (x) \leq \int_{[s,r[} \frac{1}{2\rho^m} d\mu_s (\rho) = I^s\, .
\end{equation}
From \eqref{e:ass_continua}, \eqref{e:singolare_1} and \eqref{e:singolare_2} we conclude that $I^a+I^s$ bounds the right hand side of \eqref{e:monotonicity formula}. 
\end{proof}

The following proposition tells us that if a current as in (a), (b) or (c) in Definition \ref{d:semicalibrated} is suitably decomposed, then each element of the decomposition is again respectively of type (a), (b) or (c).

\begin{proposition}\label{p:decompose} Let $T$ be as in Definition \ref{d:semicalibrated}$(\diamondsuit)$, with $\diamondsuit = a,b$ or $c$, and suppose that there are $x\in \supp (T)\setminus \supp (\partial T)$, $\bar{r}>0$ and $J$ currents $T^1, \ldots, T^J$ such that 
\[T\res \bB_{\bar r}(x)=\sum_{j=1}^JT^j,\quad
 \de T^j\res \bB_{\bar r}(x)=0
\quad\text{and}\quad
 \|T\|(\bB_{\bar r}(x))=\sum_{j=1}^J \|T^j\|(\bB_{\bar r}(x))\,.
\]
Then each $T^j$ satisfies $(\diamondsuit)$ in Definition \ref{d:semicalibrated}.
\end{proposition}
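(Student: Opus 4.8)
The plan is to use the ``no mass cancellation'' hypothesis in two ways: first to transfer the pointwise structure of $T$, namely its orientation and multiplicity, to each summand $T^j$, and then to run, in each of the three cases, the elementary variational argument which shows that a no-cancellation summand of an (almost or area) minimizer is again of the same type. As in all the situations where this statement is applied, I read the hypothesis as the identity of measures $\|T\|\res\bB_{\bar r}(x)=\sum_j\|T^j\|$, with each $T^j$ supported in $\overline{\bB_{\bar r}(x)}$; this forces $\|T^j\|(\partial\bB_{\bar r}(x))=0$ and $\supp(T^j)\subset\supp(T)$, and each of the three conclusions will be understood ``in $\bB_{\bar r}(x)$''.

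The first step is the pointwise information. Writing $T\res\bB_{\bar r}(x)=\sum_jT^j$ as the identity of $\Lambda_m$-vector-valued measures $\vec T\,(\|T\|\res\bB_{\bar r}(x))=\sum_j\vec{T^j}\,\|T^j\|$ and pairing it pointwise with $\vec T$, the scalar identity $\|T\|\res\bB_{\bar r}(x)=\sum_j\|T^j\|$ together with the Cauchy--Schwarz bound $\langle\vec T,\vec{T^j}\rangle\le|\vec T|\,|\vec{T^j}|=1$ forces $\langle\vec T,\vec{T^j}\rangle=1$, hence $\vec{T^j}=\vec T$, for $\|T^j\|$-a.e.\ point; in particular also $\sum_j\Theta(T^j,\cdot)=\Theta(T,\cdot)$.

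Case (b) is then immediate: $\supp(T^j)\subset\supp(T)\subset\Sigma$ and $\omega(\vec{T^j})=\omega(\vec T)=1$ $\|T^j\|$-a.e., so $T^j$ is semicalibrated by the same form $\omega$. For case (a), given an integral $S$ with $\supp(S)\subset\subset\Sigma\cap\bB_{\bar r}(x)$, I write $T+\partial S=(T-T^j)+(T^j+\partial S)$; from the measure identity one has $\mass(T-T^j)\le\mass(T)-\mass(T^j)$, which combined with the minimality of $T$ in $\Sigma\cap\bB_{\bar r}(x)$ gives
\[
\mass(T^j)\ \le\ \mass(T+\partial S)-\mass(T-T^j)\ \le\ \mass(T^j+\partial S)\,,
\]
so $T^j$ is area minimizing in $\Sigma\cap\bB_{\bar r}(x)$ (if $\bB_{\bar r}(x)\not\subset U$ replace $U$ by $U\cap\bB_{\bar r}(x)$).

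For case (c) (WLOG $x=0$, $R=1$) I would cone everything off from the origin. The operation $S\mapsto 0\cone S$ is linear, and along each ray issuing from $0$ it reproduces the multiplicity and, by the first step, the orientation of its argument; hence it carries the no-cancellation decomposition $T\res\bB_{\bar r}(0)=\sum_jT^j$ to a no-cancellation decomposition $0\cone(T\res\bB_{\bar r}(0))=\sum_j0\cone T^j$, so that $0\cone T=\sum_j(0\cone T^j)+0\cone\big(T\res(\R^{m+n}\setminus\overline{\bB_{\bar r}(0)})\big)$ exhibits the area-minimizing cone $0\cone T$ as a sum with no cancellation of mass. Since a no-cancellation summand of an area minimizer is itself area minimizing --- the same three-term computation as in case (a), localized inside balls $\bB_\rho(0)$ if $0\cone T$ has infinite mass --- each $0\cone T^j$ is area minimizing, which is the assertion for case (c). I expect this last case to be the main obstacle: one must check carefully that $S\mapsto 0\cone S$ really preserves the absence of cancellation of mass --- i.e.\ the precise behaviour of orientations and multiplicities along the rays --- and that the rays lying over $\partial\bB_{\bar r}(0)$ carry no mass, which is why one arranges that $\partial\bB_{\bar r}(0)$ be a ``good'' sphere.
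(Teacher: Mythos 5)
Your proposal is correct and takes essentially the same route as the paper: in every case the conclusion comes from the no-cancellation hypothesis combined with the triangle inequality (cases (a) and (c), the latter after coning off from the center of the sphere) or with the calibration identity (case (b), where your pointwise identity $\vec{T^j}=\vec{T}$ is just the localized form of the paper's mass computation). The point you single out as the main obstacle in case (c) --- that coning preserves the absence of mass cancellation --- is precisely what the paper records in \eqref{e:cone to boundary}, and it follows, as you indicate, from the linearity of the cone construction together with the proportionality of $\mass\bigl((0\cone W)\res C\bigr)$ to $\|W\|(\bB_{\bar r}(x))$ for currents $W$ supported in the sphere.
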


\begin{proof} We divide the proof in the three cases of Definition \ref{d:semicalibrated}.

\medskip

{\bf (a)} Suppose by contradiction that there exist $j\in\{1,\dots,J\}$ and $S\in\bI_{m+1} (\Sigma)$ with $\supp (T) \subset \bB_{\bar r} (x)$ such that
$\mass (T^j\res \bB_{\bar{r}} (x)) > \mass (T^j\res \bB_{\bar{r}} (x) + \partial S)$. Then it is straightforward to check that $\mass (T\res \bB_{\bar{r}} (x)+\partial S) < \mass (T\res \bB_{\bar{r}} (x))$, which contradicts the minamility of $T$.

\medskip

{\bf (b)} By contradiction, suppose there exists $j\in \{1,\dots,J\}$ such that $T^j$ is not semicalibrated by $\omega$. Assume $j=1$. Then since $\|\omega\|_c\leq 1$, we have $T^1(\omega)<\|T^1\|(B_{\bar r}(x))$ and $T^j(\omega)\leq\|T^j\|(B_{\bar r}(x))$, for every $j\in \{2,\dots,J\}$. It follows that
\[
\|T\|(\bB_{\bar r}(x))=T(\omega)=\sum_{j=1}^JT^j(\omega)<\sum_{j=1}^J \|T^j\| (\bB_{\bar r} (x))=\|T\|(\bB_{\bar r} (x)))
\]
which gives a contradiction and concludes the proof.

\medskip

{\bf (c)} Without loss of generality we can assume $x=0$ and $R=1$. Again by contradiction assume there exist $j\in\{1,\dots,J\}$ and $S\in\bI_{m+1}(\R^{m+n})$ such that $\de (S\res C)=\de(0\cone T^j\res C)$ and $\mass(S\res C)<\mass (0\cone T^j\res C)$, where 
\[
C:=\big\{\lambda z\,:\, z\in B_{\bar r}(x)\cap \partial \bB_1 (0),\, \lambda\in ]0,1[\big\}.
\] 
We can assume $j=1$. Notice also that 
\begin{equation}\label{e:cone to boundary}
\mass ((0\cone T)\res C)=\frac{1}{m} \|T\|(\bB_{\bar r}(x))=\frac{1}{m}\sum_{j=1}^J\|T^j\|(\bB_{\bar r}(x))=\sum_{j=1}^J\mass ((0\cone T^j)\res C).
\end{equation}
Then we have
\begin{align*}
\mass((0\cone T)\res C)
&\leq \mass \Bigl(\Bigl(S+\sum_{j=2}^J0\cone T^j\Bigr)\res C\Bigr)\leq\mass(S\res C)+\mass\Bigl(\sum_{j=2}^J(0\cone T^j)\res C\Bigr)\\
&<  \mass ((0\cone T^1)\res C)+\mass\Bigl(\sum_{j=2}^J(0\cone T^j)\res C\Bigr)\stackrel{\eqref{e:cone to boundary}}{=}\mass((0\cone T)\res C). 
\end{align*}
The latter is a contradiction and thus completes the proof.
\end{proof}

\section{A generalization of White's epiperimetric inequality and the proof of Theorem \ref{t:Uniqueness}}

In this section we show how Theorem \ref{t:Uniqueness} follows from a suitable epiperimetric inequality due to
Brian White. However, since we prove a more accurate version of Theorem \ref{t:Uniqueness}, we state it again
more precisely in the following theorem. From now, for any given $R\in \bI_m (\R^{m+n})$ we define
$\mathcal{F} (R) := \inf \{ \mass (Z) + \mass (W) : Z\in \bI_m, W\in \bI_{m+1}, Z + \partial W = R\}$.

\begin{theorem}[Uniqueness of tangent cones for almost minimizers]\label{t:Uniqueness2} Let $T\in \bI_2(\R^{n+2})$ be an almost minimizer. Then there is a $\gamma_0>0$, $J$ $2$-dim.~distinct planes $\pi_i$, each pair of which intersect only at $0$, and $J$ integers $n_i$ such that, if we set $S:= \sum_i n_i \a{\pi_i}$, then
\begin{align}
&\mathcal{F} \big((T_{x,r}-S)\res \bB_1\big)\leq C_{11} \, r^{\gamma_0},\label{e:roc1}\\
&\dist\big(\supp(T\res \bB_r(x)), \supp (S)\big)\leq C_{11} \, r^{1+\gamma_0}.\label{e:roc2}
\end{align}
Moreover, there are $\bar{r}>0$ and $J\geq 1$ currents $T^j\in {\bf I}_2 (\bB_{\bar r} (x))$ such that 
\begin{itemize}
\item[(i)] $\partial T^j \res \bB_{\bar r} (x) =0$ and each $T^j$ is an almost minimizer;
\item[(ii)] $T\res \bB_{\bar r} (x) = \sum_j T^j$ and $\supp (T_j)\cap \supp (T_i) = \{x\}$ for every $i\neq j$;
\item[(iii)] $n_j \llbracket \pi_j \rrbracket$ is the unique tangent cone to each $T^j$ at $x$.
\end{itemize}
\end{theorem}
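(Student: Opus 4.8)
The plan is to reduce Theorem~\ref{t:Uniqueness2} to Theorem~\ref{t:Uniqueness} (uniqueness of the tangent cone) plus the structure of $2$-dimensional area-minimizing cones, and to obtain the quantitative rates \eqref{e:roc1}--\eqref{e:roc2} from White's epiperimetric inequality combined with the almost-monotonicity of Proposition~\ref{p:AMO}. First I would record the classical fact (Federer, dimension reduction) that a $2$-dimensional area-minimizing cone $T_x$ with $\partial T_x=0$ in $\R^{n+2}$ is a finite sum $S=\sum_i n_i\a{\pi_i}$ of planes with multiplicities, where the $\pi_i$ are distinct $2$-planes; since $S$ has an isolated singularity at $0$ (or none at all), the planes can be taken pairwise intersecting only at $0$. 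This gives the planes $\pi_i$ and integers $n_i$ in the statement: they are exactly those of the (unique, by Theorem~\ref{t:Uniqueness}) tangent cone $T_x$.

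Next I would prove the decay estimates \eqref{e:roc1}--\eqref{e:roc2}. The idea is White's: introduce the Almgren-type frequency/energy quantity for the blow-up sequence and show that along the monotonicity from Proposition~\ref{p:AMO} the mass ratio $\theta(r):=\|T\|(\bB_r(x))/(\omega_2 r^2)+C r^{\alpha_0}$ converges to $\Theta:=\Theta(T,x)$ at a polynomial rate. Concretely, one compares $T_{x,r}\res\bB_1$ with the cone over its boundary slice and with the minimizing cone $S$; White's epiperimetric inequality states that if a slice $\partial(T_{x,r}\res\bB_1)$ is $\F$-close to $\partial S$ then one can fill it with a competitor whose mass beats the cone by a definite factor, i.e.\ there is $\kappa\in(0,1)$ with
\[
\text{(excess at scale }r)\le(1-\kappa)\,\text{(excess at scale }2r)+C r^{2+\alpha_0}.
\]
Iterating this differential/discrete inequality, together with the almost-monotonicity inequality \eqref{e:monotonicity formula} controlling $\int |(z-x)^\perp|^2|z-x|^{-m-2}\,d\|T\|$, yields geometric decay of the excess, hence $|\theta(r)-\Theta|\le C r^{\gamma_0}$ for some $\gamma_0>0$. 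The $\F$-distance bound \eqref{e:roc1} then follows by summing the increments of $T_{x,r}$ over dyadic scales using the tilt-excess/mass-excess relation, and \eqref{e:roc2} follows from \eqref{e:roc1} by a standard interpolation between flat distance and Hausdorff distance of supports for almost minimizers (using the monotonicity formula to control density from below away from $S$, so that no mass of $T$ can sit far from $\supp(S)$). The only genuinely new point here relative to \cite{Wh} is that every monotonicity/epiperimetric step carries the extra error term $r^{2+\alpha_0}$ coming from \eqref{e:almost minimizer2}; one checks these errors are summable and do not destroy the geometric rate, which is where the hypothesis $\alpha_0>0$ is used.

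Finally I would establish the decomposition (i)--(iii). Fix $\bar r$ small. By \eqref{e:roc2} the support of $T$ in $\bB_{\bar r}(x)$ lies in a thin cone around $\bigcup_i\pi_i$; since the $\pi_i$ meet only at $x$, for $\bar r$ small enough these thin cones are pairwise disjoint outside $\{x\}$, so $\supp(T\res\bB_{\bar r}(x))\setminus\{x\}$ splits into $J$ relatively open and closed pieces $\Gamma_j$, one around each $\pi_j$. Define $T^j:=T\res(\Gamma_j\cup\{x\})$. Then $T\res\bB_{\bar r}(x)=\sum_j T^j$, the masses add (the pieces are disjoint), and $\partial T^j\res\bB_{\bar r}(x)=0$ because $\partial T\res\bB_{\bar r}(x)=0$ and the boundary cannot jump between the separated pieces (it would have to be supported in $\{x\}$, hence vanish for a $1$-dimensional boundary of a $2$-current with density bounds). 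Thus the hypotheses of Proposition~\ref{p:decompose} are met, so each $T^j$ is again of type $(\diamondsuit)$ and therefore, by Proposition~\ref{p:AM}, an almost minimizer; applying Theorem~\ref{t:Uniqueness} to $T^j$ gives its unique tangent cone at $x$, which must be the part of $S$ living in the thin cone around $\pi_j$, namely $n_j\a{\pi_j}$ — note $n_j\a{\pi_j}$ is itself area-minimizing. The main obstacle is the quantitative step: carefully tracking the almost-minimality error terms through White's epiperimetric-inequality iteration and through the passage from the $\F$-distance bound to the Hausdorff-distance bound, ensuring the power $\gamma_0$ is strictly positive; the topological splitting in (i)--(iii), once \eqref{e:roc2} is in hand and $\bar r$ is chosen small depending on the angles between the $\pi_i$, is comparatively routine.
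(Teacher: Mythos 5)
Your overall strategy coincides with the paper's: the structure theorem for $2$-dimensional area-minimizing cones, the epiperimetric inequality combined with the almost monotonicity of Proposition \ref{p:AMO} to get geometric decay of the normalized mass excess $e(r)$, then the rates \eqref{e:roc1}--\eqref{e:roc2}, and finally the splitting of the support via \eqref{e:roc2}. But two points need repair. The first is logical: Theorem \ref{t:Uniqueness} has no proof independent of Theorem \ref{t:Uniqueness2} --- in the paper it is the consequence, not the input --- so you cannot ``reduce Theorem \ref{t:Uniqueness2} to Theorem \ref{t:Uniqueness}'', nor cite the latter to identify the $\pi_i, n_i$ or to prove item (iii). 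This is fixable, since your second paragraph contains the actual uniqueness argument, but as written the opening and closing of your proof are circular. Relatedly, for item (i) you route through Propositions \ref{p:decompose} and \ref{p:AM}, which only cover the geometric cases (a)--(c) of Definition \ref{d:semicalibrated}; Theorem \ref{t:Uniqueness2} concerns a general almost minimizer, for which one argues directly that each piece $T^j$ is almost minimizing because the masses add, so any competitor for $T^j$ produces a competitor for $T$.

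The second and more substantive gap is the passage from the excess decay $e(r)\leq C r^{\eps}$ to the flat-norm rate \eqref{e:roc1}, which you dispatch as ``summing the increments over dyadic scales using the tilt-excess/mass-excess relation''. This is precisely where the paper does its hardest work. Its route is: (a) estimate $\mass\big(F_\sharp (T\res (\bB_t\setminus \bB_s))\big)$ for $F(x)=x/|x|$ by Cauchy--Schwarz, where one factor is $\big(\int_{\bB_t\setminus \bB_s} |x^\perp|^2 |x|^{-4}\, d\|T\|\big)^{1/2}$, controlled via \eqref{e:monotonicity formula} by $\big(e(t)-e(s)+C t^{\alpha_0}\big)^{1/2}\leq C t^{\eps/2}$ --- note that without the geometric decay supplied by the epiperimetric inequality this square root is \emph{not} summable over dyadic annuli, which is exactly why monotonicity alone fails to give uniqueness; and (b) an explicit homotopy current $H$ with $\partial H = F_\sharp (T\res (\bB_r\setminus \bB_s)) - T_r\res \bB_1 + T_s \res \bB_1$ and $\mass (H)\leq C r^{\eps/2}$, which converts the bound on the radial projection into the bound on $\mathcal{F}\big((T_r - T_s)\res \bB_1\big)$. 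You have assembled the right ingredients (you do invoke the control of $\int |(z-x)^\perp|^2 |z-x|^{-m-2}\, d\|T\|$), but some such construction must actually be supplied: the flat-distance estimate is not a routine consequence of the excess decay, and this is the step your proposal leaves unproven.
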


From the latter theorem, Proposition \ref{p:AM} and Proposition \ref{p:decompose} we conclude

\begin{corollary}\label{c:Uniqueness-abc}
Let $T$ be as in Definition \ref{d:semicalibrated}$(\diamondsuit)$, with $\diamondsuit =a,b$ or $c$, and $x\in \supp (T)\setminus \supp (\partial T)$. Then all the conclusions of Theorem \ref{t:Uniqueness2} hold for $T$ and moreover each $T^j$ satisfies Definition \ref{d:semicalibrated}$(\diamondsuit)$.
\end{corollary}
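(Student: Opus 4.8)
The plan is to deduce Corollary \ref{c:Uniqueness-abc} by simply assembling the pieces that have been put in place in the previous sections. First I would apply Proposition \ref{p:AM} to the current $T$ of type $(\diamondsuit)$: this tells us that $T$ is an almost minimizer in the sense of Definition \ref{d:AM}, so that Theorem \ref{t:Uniqueness2} applies to $T$. In particular we obtain the planes $\pi_i$, the integers $n_i$, the cone $S=\sum_i n_i \a{\pi_i}$, the estimates \eqref{e:roc1}--\eqref{e:roc2}, the radius $\bar r$, and the currents $T^1,\dots,T^J$ satisfying properties (i)--(iii) of Theorem \ref{t:Uniqueness2}. All the conclusions of Theorem \ref{t:Uniqueness2} are thus immediately available for $T$, which is the first assertion of the corollary.

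It then remains only to check that each $T^j$ is again of type $(\diamondsuit)$ in the sense of Definition \ref{d:semicalibrated}. For this I would verify that the decomposition $T\res \bB_{\bar r}(x) = \sum_j T^j$ provided by Theorem \ref{t:Uniqueness2} satisfies exactly the hypotheses of Proposition \ref{p:decompose}. Indeed, property (i) gives $\partial T^j \res \bB_{\bar r}(x)=0$, property (ii) gives $T\res \bB_{\bar r}(x)=\sum_j T^j$, and property (ii) also gives $\supp(T_i)\cap\supp(T_j)=\{x\}$ for $i\neq j$; the latter disjointness (together with the fact that $\mathcal H^m$ of a point vanishes) forces $\|T\|(\bB_{\bar r}(x))=\sum_j \|T^j\|(\bB_{\bar r}(x))$, which is the mass-additivity hypothesis of Proposition \ref{p:decompose}. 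Applying that proposition yields that every $T^j$ satisfies $(\diamondsuit)$ in Definition \ref{d:semicalibrated}, completing the proof.

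The only point requiring a word of care — and the one I expect to be the main (minor) obstacle — is checking that the measure-theoretic mass-additivity hypothesis of Proposition \ref{p:decompose} really follows from the support condition in (ii), since a priori two currents could share mass on a set of positive $\mathcal H^m$-measure even while their supports meet only at a point; here the point is that $\supp(T_i)\cap\supp(T_j)=\{x\}$ means the carrying rectifiable sets are essentially disjoint, so the masses add. Once this is observed, the corollary is a direct consequence of Theorem \ref{t:Uniqueness2}, Proposition \ref{p:AM} and Proposition \ref{p:decompose}, with no further computation needed.
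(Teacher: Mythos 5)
Your proposal is correct and follows exactly the route the paper takes: the paper deduces the corollary in one line by combining Proposition \ref{p:AM} (to get almost minimality, hence applicability of Theorem \ref{t:Uniqueness2}) with Proposition \ref{p:decompose} applied to the decomposition $T\res\bB_{\bar r}(x)=\sum_j T^j$. Your extra verification that the support condition in (ii) implies the mass-additivity hypothesis of Proposition \ref{p:decompose} is a sound and worthwhile detail that the paper leaves implicit.
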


\subsection{White's epiperimetric inequality and its generalization} As already mentioned, the key ingredient in the proof of Theorem \ref{t:Uniqueness2} is a suitable generalization of White's epiperimetric inequality \cite{Wh}. We record the main ingredient of White's argument in the following lemma. Since however the paper \cite{Wh} does not state this lemma explicitely, we provide in the last
section a brief argument, referring to propositions and lemmas which are instead explicitely stated in \cite{Wh} (the only difference is 
in a technical point, namely the estimate \eqref{e:Fourier}, for which we point out a shorter argument).

\begin{lemma}\label{l:construction}
Let $S \in {\bf I}_2(\R^{n+2})$ be an area minimizing cone.
There exists a constant $\eps_{13}>0$ with the following property. If
$R:= \de (S\res \bB_1)$ and $Z\in {\bf I}_1(\de \bB_1)$ is a cycle with
\begin{itemize}
\item[(i)] $\mathcal{F}(Z-R)<\eps_{13}$, 
\item[(ii)] $\mass(Z)-\mass(R)<\eps_{13}$,
\item[(iii)] $\dist\big(\supp (Z),\supp (R)\big)<\eps_{13}$,
\end{itemize}
then there exists $H\in {\bf I}_2(\bB_1)$ such that $\partial H=Z$
and
\[
\|H\|(\bB_1)-\|S\|(\bB_1)\leq (1-\eps_{13})\big[\|0\cone Z\|(\bB_1)-\|S\|(\bB_1)\big]. 
\]
\end{lemma}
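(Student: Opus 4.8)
The plan is to reduce this to the explicit construction that White carries out for area minimizing cones in \cite{Wh}. The object $S$ is a $2$-dimensional area minimizing cone, so by Allard–Almgren / White its spherical cross-section $R = \partial (S\res \bB_1)$ is, on $\partial \bB_1$, a finite union of great circular arcs forming a geodesic network, and the key rigidity is that $R$ is itself ``stationary'' with a Jacobi-type stability property among nearby cycles. White's argument produces, for a competitor cross-section $Z$ that is $\mathcal{F}$-close, mass-close, and support-close to $R$, a filling $H$ of $Z$ whose mass beats the conical filling $0\cone Z$ by a fixed fraction of the excess $\|0\cone Z\|(\bB_1)-\|S\|(\bB_1)$. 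I would state up front that the construction is: write $Z$ as a graph (in exponential coordinates on $\partial\bB_1$, away from the singular set of $R$ and patched near it) of a small section over $R$; decompose that section into Fourier modes with respect to the natural elliptic operator on the network; use the modes with eigenvalue corresponding to the homogeneity of $S$ (degree one) to build, instead of the cone, the ``correct'' homogeneous-degree-$1$ extension plus a controlled perturbation; and check that the resulting $H$ has the stated mass deficit. The three smallness hypotheses (i)–(iii) are exactly what is needed to make all of this legitimate: (iii) keeps $\supp(Z)$ in the tubular neighborhood of $\supp(R)$ where the graph representation and exponential coordinates are available, (ii) forces the graph to be small in an energy sense, and (i) rules out the degenerate situation where $Z$ is close to $R$ as a flat chain but carries extra cancelling mass far away, so that $Z$ is genuinely a small graph over $R$ of the expected multiplicity.

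The main steps, in order, would be: (1) record the structure of $R$ as a minimal geodesic net on $\partial\bB_1$ and the associated second-variation / stability operator, citing the relevant propositions of \cite{Wh}; (2) use (i)+(ii)+(iii) to show $Z$ is the image of a $W^{1,2}$-small (and, by a standard elliptic/monotonicity bootstrap, $C^1$-small on compact pieces away from the vertices) normal section $u$ over $R$, with the correct multiplicity on each arc; (3) expand $u$ in the eigenfunctions of the stability operator and split off the part $u_1$ living in the degree-one eigenspace; (4) define $H$ as (the graph over $S$ of) the unique homogeneous degree-one extension of $u_1$ together with a compactly-controlled correction making $\partial H = Z$ exactly, and estimate $\|H\|(\bB_1)$ by Taylor-expanding the area functional to second order and comparing with the Taylor expansion of $\|0\cone Z\|(\bB_1)$; the strict spectral gap of the stability operator (no eigenvalues making the quadratic form degenerate in the relevant range) produces the fixed factor $(1-\eps_{13})$. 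Finally (5) absorb all the higher-order error terms using the smallness from (ii), shrinking $\eps_{13}$ if necessary, and verify that the same $\eps_{13}$ can be taken uniform.

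The hard part, and the only place where I expect real work beyond bookkeeping, is step (2): passing from the three \emph{metric/flat-norm} hypotheses to a genuine small-graph representation of $Z$ over $R$ with the right multiplicity, uniformly up to and across the singular vertices of the network. Away from the vertices this is the standard Allard-type graphical decomposition, but near a vertex one must argue that a flat-close, mass-close cycle cannot ``resolve'' the vertex in an unexpected combinatorial way — this is where hypothesis (i) on $\mathcal{F}(Z-R)$ (rather than just the flat distance) does the work, since it controls the filling and hence pins down the homology class near each vertex. As the excerpt itself notes, \cite{Wh} does not isolate this lemma and treats the relevant estimate (the Fourier/spectral bound, the analogue of what is labelled \eqref{e:Fourier} in the last section here) somewhat differently, so I would handle that one technical point with the shorter argument promised there and otherwise quote White's propositions verbatim; the remaining items are routine once the graphical picture is in place.
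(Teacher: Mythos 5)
Your overall strategy is the same as the paper's (both follow White's epiperimetric construction: represent the cross-section as a small graph, do a Fourier/spectral decomposition, and beat the cone by extending the modes with their natural homogeneity), but your structural picture of $R$ is wrong and this misplaces the actual difficulty. A $2$-dimensional area minimizing cone in $\R^{n+2}$ is a finite sum $\sum_i Q_i \a{\pi_i}$ of integer multiples of planes meeting only at the origin, so $R=\partial(S\res\bB_1)$ is a \emph{disjoint} union of equatorial circles with multiplicities: there is no geodesic network and there are no singular vertices. The step you single out as the hard part --- controlling how $Z$ ``resolves'' a vertex of the net --- is vacuous here, and hypothesis (iii) reduces everything to the case $S=Q\a{\pi_0}$, $R=Q\a{\gamma_0}$ for a single circle.

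The genuine difficulty, which your step (2) does not address and which makes your ``small normal section $u$ over $R$'' fail whenever $Q>1$, is the multiplicity. The paper first decomposes $Z$ into simple closed Lipschitz curves $Z_i$ with winding numbers $k_i$ over $\gamma_0$ satisfying $\sum_i k_i=Q$ (using (ii) to exclude $k_i<0$ and the isoperimetric inequality to dispose of $k_i=0$), and then, via White's Proposition 2.7, represents each piece as a Lipschitz curve $\vartheta\mapsto(\cos\vartheta,\sin\vartheta,f(\vartheta))$ parametrized over $[0,2Q\omega_2]$, i.e.\ a graph over the $Q$-fold cover of the circle, not over $R$. Consequently the Fourier expansion is in frequencies $i/Q$, the dangerous mode is $i=Q$ (a tilt of the plane, removed by minimizing the cylindrical excess over planes $\tau$ --- this is the short argument for \eqref{e:Fourier}), and the competitor $H'$ is built from the fractionally homogeneous extension $g(r,\vartheta)=\sum_i r^{i/Q}(\alpha_i\cos(\tfrac{i}{Q}\vartheta)+\beta_i\sin(\tfrac{i}{Q}\vartheta))$, which has branch-point behavior at the origin; a single-valued graph over $S$ with a degree-one extension, as in your step (4), cannot produce this. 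Without the winding-curve reduction and the $i/Q$ spectrum, the argument only covers the multiplicity-one case.
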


A simple compactness argument allows us to generalize this lemma in the following sense.

\begin{proposition}\label{p:generalized epi}
Let $S \in {\bf I}_2(\R^{n+2})$ be an area minimizing cone.
For every $C_{12}>0$ there exists a constant $\eps_{11}>0$, depending only on the constants $C_{01}$ and $\alpha_0$
of Definition \ref{d:AM} and upon $S$, with the following property.
Assume that $T \in {\bf I}_2(\R^{n+2})$ is an
almost minimizer with $0\in \supp(T)$ and set $T_\rho := (\iota_{0, \rho})_\sharp T$.
If $r$ is a positive number with 
\begin{itemize}
\item $0< 2\,r < \min\{2^{-1}\dist(0,\supp(\de T)), 2\eps_{11}\}$,
\item $\mathcal{F}\big((T_{2r}-S)\res \bB_1\big) < 2\,\eps_{11}$, $\|T\| (\bB_{2r}) \leq C_{12} r^2$
\item and $\de (T\res \bB_r) \in {\bf I}_1(\R^{n+2})$,
\end{itemize} 
then
\begin{equation}\label{e:generilez epi}
\|T_r\|(\bB_1) - \|S\|(\bB_1) \leq \left(1-\eps_{12}\right) \Big( \|0\cone \de(T_r\res \bB_1)\| (\bB_1) - \|S\|(\bB_1)\Big) + \bar c\, r^{\alpha_0} .
\end{equation}
$\bar{c}$ depends only on $C_{01}$, $\alpha_0$ and $\Theta (0, S)$
and $\eps_{12}>0$ is any number smaller than some $\bar{\eps}>0$, which also depends on $C_{0}$, $\alpha_0$ and $\Theta (0, S)$. Moreover $\bar{c}$ 
 depends linearly on $C_{01}$. In particular, if $T$ is as in Definition \ref{d:semicalibrated}, then $\alpha_0=1$ and:
$\bar{c}$ depends linearly on $\bA:=\|A_\Sigma\|_\infty$ in case (a), it depends linearly on $\bOmega:= \|d\omega\|_\infty$ in case (b) and it quals $C_0 R^{-1}$ for some geometric constant $C_0$ in case (c) (in the sense of Remark \ref{r:costanti}).
\end{proposition}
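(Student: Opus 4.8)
The plan is to deduce Proposition~\ref{p:generalized epi} from Lemma~\ref{l:construction} by a contradiction-compactness argument, exploiting the fact that the almost-minimizing condition produces, for $r$ small, a current that is \emph{close} (in the $\mathcal F$-metric, in mass, and in support) to a competitor that a genuine area-minimizing cone would have to admit. First I would fix $C_{12}>0$ and $S$, and assume the statement fails: there is a sequence of almost minimizers $T_k$ with constants $C_{01},\alpha_0$ (fixed), with $0\in\supp(T_k)$, and radii $r_k$ with $2r_k<\min\{2^{-1}\dist(0,\supp(\partial T_k)),2\eps_k\}$ and $\eps_k\downarrow 0$, such that the hypotheses $\mathcal F((\,(T_k)_{2r_k}-S)\res\bB_1)<2\eps_k$, $\|T_k\|(\bB_{2r_k})\le C_{12}r_k^2$ and $\partial(T_k\res\bB_{r_k})\in\mathbf I_1$ all hold, yet \eqref{e:generilez epi} is violated for every admissible $\eps_{12}$ and the claimed $\bar c$. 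Rescaling, set $W_k:=(T_k)_{r_k}$; then $W_k$ is (the rescaling of) an almost minimizer on $\bB_2$ with almost-minimizing constant $C_{01}r_k^{\alpha_0}\to 0$ (this is the crucial scaling of \eqref{e:almost minimizer2}: dilating by $r_k$ turns $C_{01}r^{m+\alpha_0}$ into $C_{01}r_k^{\alpha_0}r^{m+\alpha_0}$), with $\mathcal F((W_k-S)\res\bB_1)\to 0$ and uniformly bounded mass on $\bB_1$.

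Next I would extract a convergent subsequence. By the mass bound and the almost-monotonicity formula (Proposition~\ref{p:AMO}), the $W_k$ have uniformly bounded mass and boundary mass on slightly smaller balls, so up to subsequence $W_k\res\bB_1\to W_\infty$ in the sense of currents and in the flat/$\mathcal F$ topology, and $W_\infty=S\res\bB_1$ by the hypothesis $\mathcal F((W_k-S)\res\bB_1)\to0$. Crucially, the vanishing of the almost-minimizing constants forces $W_\infty$ to be area-minimizing in $\bB_1$, and the support condition $\dist(\supp(T_k\res\bB_{r_k}),\supp(S))\to0$ (which I would derive from the $\mathcal F$-closeness together with the monotonicity formula and the usual $\eps$-regularity / density argument, since $S$ is a cone) gives Hausdorff convergence of supports. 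One then checks that for a.e.\ radius $\approx 1$ the slices converge: $Z_k:=\partial(W_k\res\bB_1)\to\partial(S\res\bB_1)=R$ in $\mathcal F$, $\mass(Z_k)\to\mass(R)$ (by lower semicontinuity plus the almost-monotonicity upper bound and the fact that $W_\infty=S$ has no mass concentration on $\partial\bB_1$), and $\dist(\supp Z_k,\supp R)\to0$. Hence for $k$ large $Z_k$ satisfies hypotheses (i)--(iii) of Lemma~\ref{l:construction}, so there is $H_k\in\mathbf I_2(\bB_1)$ with $\partial H_k=Z_k$ and
\[
\|H_k\|(\bB_1)-\|S\|(\bB_1)\le(1-\eps_{13})\big(\|0\cone Z_k\|(\bB_1)-\|S\|(\bB_1)\big).
\]

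Finally I would close the loop by feeding $H_k$ into the almost-minimizing inequality for $W_k$. Since $W_k\res\bB_1$ and $H_k$ share the same boundary $Z_k$, the current $S':=H_k-W_k\res\bB_1$ is a cycle, hence (for a.e.\ radius, filling it in) an admissible $\partial S$ in \eqref{e:almost minimizer2} applied to $W_k$ at scale $1$; this gives $\|W_k\|(\bB_1)\le\|H_k\|(\bB_1)+C_{01}r_k^{\alpha_0}$. Combining with the epiperimetric bound for $H_k$ and noting $\|0\cone Z_k\|(\bB_1)=\tfrac1m\mass(Z_k)$, and going back to the original scale (which reintroduces the factor $r_k^{\alpha_0}$, so $\bar c$ is a geometric multiple of $C_{01}$, linear in $C_{01}$ as claimed, and specializes to $\bA$, $\bOmega$, $C_0R^{-1}$ in the three cases by Remark~\ref{r:costanti}), one obtains exactly \eqref{e:generilez epi} with $\eps_{12}=\eps_{13}/2$ for $k$ large — contradicting the assumption that it fails. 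The main obstacle I anticipate is the careful slicing step: one must select radii $\rho_k\uparrow 1$ along which the slices $\partial(W_k\res\bB_{\rho_k})$ are integral and simultaneously converge in mass, support, and $\mathcal F$ to $R$; this requires combining the coarea/slicing estimates with the almost-monotonicity formula to rule out mass escaping to $\partial\bB_1$, and then a small diagonal/rescaling argument to transfer the gain back from $\bB_{\rho_k}$ to $\bB_1$ while keeping the error terms of the correct order $r_k^{\alpha_0}$. A secondary technical point is verifying that the support-closeness hypothesis of Lemma~\ref{l:construction} genuinely follows from the $\mathcal F$-closeness; here one uses that $S$ is a cone together with the monotonicity formula and a standard density-lower-bound argument for almost minimizers.
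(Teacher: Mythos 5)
Your overall architecture --- negate the statement along a sequence $T^k$, $r_k\downarrow 0$, rescale so that the almost-minimality error in \eqref{e:almost minimizer2} becomes $C_{0}r_k^{\alpha_0}$, pass to the limit $S$, verify the hypotheses of Lemma \ref{l:construction}, and feed the resulting $H^k$ back into the almost-minimality inequality --- is exactly the paper's. However, two steps in your plan do not close as written. First, you never reduce to the case $\|0\cone \de(W_k\res \bB_1)\|(\bB_1)-\|S\|(\bB_1)\geq 0$. This is not cosmetic: if that quantity is negative, multiplying it by $(1-\eps_{13})$ makes the epiperimetric conclusion \emph{weaker} than the negated inequality, and the final comparison yields no contradiction. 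The paper disposes of the negative case separately, by comparing $W_k\res \bB_1$ directly with the cone $0\cone\de(W_k\res\bB_1)$ via almost minimality; this reduction is then used twice more downstream.

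Second --- and this is precisely the step you flag as your main obstacle --- hypothesis (ii) of Lemma \ref{l:construction}, namely $\mass(Z_k)\to\mass(R)$ at the \emph{fixed} radius $1$, does not follow from lower semicontinuity together with the absence of mass concentration on $\de\bB_1$: those facts give convergence of $\|W_k\|(\bB_1)$, not of the boundary-slice masses, and measure convergence of the $W_k$ controls slice masses only in an averaged (coarea) sense, i.e. for a.e.\ radius, not at the one radius where the inequality must be contradicted. Your proposed workaround (choose good radii $\rho_k\uparrow 1$ and transfer back) runs into the difficulty that the epiperimetric gain is then measured against $\|0\cone\de(W_k\res\bB_{\rho_k})\|$ while the negated inequality involves $\|0\cone\de(W_k\res\bB_1)\|$, and these cones are not comparable in any obvious way. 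The paper's resolution is the one idea missing from your write-up: after establishing $\|W_k\|(\bB_1)\to\|S\|(\bB_1)$ (via the slicing/flat-distance argument on a larger ball) and after the sign reduction above, the \emph{negated inequality itself} forces $\|0\cone\de(W_k\res\bB_1)\|(\bB_1)\to\|S\|(\bB_1)$, i.e.\ $\mass(Z_k)\to\mass(R)$; hypothesis (ii) is extracted from the contradiction hypothesis, not from compactness. With these two insertions your argument becomes the paper's proof; the remaining steps (Hausdorff convergence of supports from almost monotonicity, flat convergence of the slices, and the final use of \eqref{e:almost minimizer2} with the cone over the cycle $H^k-W_k\res\bB_1$ as filling) are as you describe.
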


\begin{proof} 
We argue by contradiction and assume there exist
sequences of almost minimizers $(T^k)_{k\in \N}\subset {\bf I}_2(\R^{2+n})$
and radii  $r_k\downarrow 0$ with $0< 2\,r_k < \dist(0,\supp(\de T^k))$
such that $R^k:= (T^k)_{r_k}$ satisfies
$\mathcal{F}((R^k-S)\res \bB_2) < \frac{1}{k}$ and
\begin{equation}\label{e:hyp1}
\|R^k\| (\bB_1)- \|S\|(\bB_1) > \Bigl(1-\frac{1}{k}\Bigr)
\Big(\|0\cone \de(R^k\res \bB_1)\|(\bB_1)-\|S\|(\bB_1)\Big)+ k \, r_k^{\alpha_0}.
\end{equation}
It is important to notice that, in contradicting the statement of Proposition~\ref{p:generalized epi}, the currents
$T^k$ satisfy \eqref{e:almost minimizer2} for some constants $C_{0}$ and $\alpha_0$ which
are fixed, i.e. {\em independent of $k$}.
First of all, without loss of generality we can assume
\begin{equation}\label{e:wlog}
\|0\cone \de(R^k\res \bB_1)\|(\bB_1)-\|S\|(\bB_1) \geq 0\, ;
\end{equation}
indeed if $\|0\cone \de(R^k\res \bB_1)\|(\bB_1)-\|S\|(\bB_1)<0$ we could
use the almost minimality and the appropriate rescaling to conclude
\begin{align*}
\|R^k\| (\bB_1)- \|S\|(\bB_1) &\leq \|0\cone \de(R^k\res \bB_1)\|(\bB_1)-\|S\|(\bB_1) + C_1 r_k^{\alpha_0}\\
&\leq  \Bigl(1-\frac{1}{k}\Bigr)
\Big(\|0\cone \de(R^k\res \bB_1)\|(\bB_1)-\|S\|(\bB_1)\Big)+ C_1 \, r_k^{\alpha_0}\, ,
\end{align*}
contradicting \eqref{e:hyp1} for $k$ large enough.

Observe that we have a uniform bound for $\|R^k\| (\bB_2)$. Thus, by the usual slicing theorem,
passing to a subsequence there is a radius $\rho\in ]\frac{3}{2}, 2[$ such $\mass (\partial ((R^k-S)\res \bB_\rho))$ is uniformly bounded.
On the other hand $R^k -S$ is converging to $0$ in the sense of currents and hence, by \cite[Theorem 31.2]{Sim}, $\mathcal{F} ((R^k-S)\res \bB_\rho)\to 0$.
This means that there are integral currents $H^k, G^k$ with $\mass (H^k)+\mass (G^k)\to 0$ such that
\[
(R^k-S)\res \bB_\rho = \partial H^k + G^k\, .
\]
Taking the boundary of the latter identity we conclude that $\partial G^k = \partial ((R^k-S)\res \bB_\rho)$. Now, rescaling the almost minimality property of $T^k$,
we conclude that
\[
\|R^k\| (\bB_\rho) \leq \|S\| (\bB_\rho) + \mass (G_k) + C_1 r_k^{\alpha_0}\, .
\]
On the other hand, since $(\mass (G^k) + r_k)\downarrow 0$, we infer 
\[
\limsup_{k\to\infty} \|R^k\| (\bB_\rho) \leq \|S\| (\bB_\rho)\, .
\]
Since however $R^k\to S$ in $\bB_2$, we also have
\[
\|S\| (\bB_\rho) \leq \liminf_{k\to\infty} \|R^k\| (\bB_\rho)\, .
\]
We thus conclude that $\|R^k\| \weaks \|S\|$ on $\bB_\rho$ in the sense of measures and, since $\|S\| (\partial \bB_1) =0$ by the conical property of $S$,
we infer that $\|R^k\|(\bB_1) \to \|S\|(\bB_1)$. Thus
\eqref{e:hyp1} and \eqref{e:wlog} imply
\begin{equation}\label{hyp2}
\lim_{k\to\infty} \mass(\partial (R^k\res \bB_1)) = \mass(\partial (S\res \bB_1)) \, .
\end{equation}
The almost monotonicity formula for $T^k$ (in the rescaled version for $R^k$) implies through standard arguments that $\supp (R^k)$ converges
to $\supp (S)$ in the Hausdorff sense: one can follow, for instance, the proof of \cite[Lemma 17.11]{Sim}. Finally, again by  \cite[Theorem 31.2]{Sim}, we conclude that $\mathcal{F} ((R^k-S)\res \bB_1) \to 0$ and hence, arguing as above, we infer the existence of integer rectifiable currents $G^k$ such that
$\partial G^k = \partial ((R^k-S)\res \bB_1)$ and $\mass (G^k)\to 0$. In turn this implies
$\mathcal{F}(\partial (R^k\res \bB_1)-\de (S\res \bB_1))\to 0$. So all the assumptions of Lemma~\ref{l:construction}
are satisfied, and there exist integral currents $H^k$ such that 
$\partial H^k=\partial (R^k\res \bB_1)$
and
\begin{equation}\label{e:contra}
\|H^k\|(\bB_1)-\|S\|(\bB_1)\leq (1-\eps_{13})\big(\|0\cone \de(R^k\res \bB_1)\|(\bB_1)
-\|S\|(\bB_1)\big).
\end{equation}
By the almost minimality of $T^k$ and the usual rescaling, we conclude
\[
\|R^k\| (\bB_1) \leq \|H^k\| (\bB_1) + C_{0} r_k^{\alpha_0}\, .
\]
Thus,
\begin{align*}
\|R^k|| (\bB_1) - \|S\| (\bB_1) &\leq \|H^k\| (\bB_1) - \|S\| (\bB_1) + C_{0} r_k^{\alpha_0}\nonumber\\
&\stackrel{\eqref{e:contra}}{\leq}  (1-\eps_{13})\big(\|0\cone \de(R^k\res \bB_1)\|(\bB_1)
-\|S\|(\bB_1)\big) + C_{0} r_k^{\alpha_0}\, .
\end{align*}
However, when $k$ is so large that $\frac{1}{k} < \eps_{13}$ and $k > C_{0}$, the latter inequality contradicts \eqref{e:hyp1}
(recall \eqref{e:wlog}).
\end{proof}

\subsection{Proof of Theorem \ref{t:Uniqueness2}} Without loss of generality from now on we assume that
$x=0$ and that $\dist (0, \supp (\partial T)) \geq 2 $. Moreover we set $T_r := (\iota_{0,r})_\sharp T$.

\medskip

\textsc{Step 1. Blow-up.} By the almost monotonicity, the family $\{T_r\}_{0<r\leq 1}\subset {\bf I}_2 (\R^{n+2})$ enjoys a uniform bound for $\|T_r\| (K)$ whenever $K\subset \R^{n+2}$ is a compact set. Moreover, for any $U\subset\subset \R^{n+2}$ open, $\partial T_r \res U = 0$, provided $r$ is large enough. It follows that we can
apply the compactness theorem of integral currents, and for every sequence $r_k\downarrow 0$ we can extract a subsequence $T_{\rho_k}$ converging to an integral current $S$ with $\partial S =0$. Observe also that we can
argue as in the proof of Proposition \ref{p:generalized epi} to conclude that for every $N_0\in \N$ there is a
subsequence, not relabeled, and a $\bar r\in ]N_0, N_0+1[$ with the following properties
\begin{itemize}
\item $\|T_{\rho_k}\| (\bB_{\bar r}) \to \|S\| (\bB_{\bar r})$;
\item There are currents $H_k \in {\bf I}_2 (\R^{n+2})$ with $\mass (H^k)\downarrow 0$ and $\partial H^k =
\partial ((T_{\rho_k} -S)\res \bB_{\bar r})$.
\end{itemize}
We then easily conclude that $S$ is area minimizing in $\bB_{\bar r}$ and that $\|T_{\rho_k}\| (V) \to \|S\| (V)$ for any
open set $V\subset \subset \bB_{\bar r}$ with $\|S\| (\partial V)=0$. A standard argument shows that these properties
remain then true for every ball and for the {\em entire sequence} $\{T_{\rho_k}\}$. As a consequence
of the fact that $\Theta (0, T)$ exists, we then conclude that
\[
\|S\| (\bB_r (0)) = \Theta (T, 0) r^2 := Q \omega_2 r^2
\]
for all radii but an (at most) countable family (recall that $\omega_2$ denotes the area of the unit disk in $\R^2$). It is then a standard fact, using the monotonicity formula for area-minimizing currents, that $S$ is a cone (see for instance \cite{Sim}). Finally, it is well known that $2$-dimensional area minimizing cones are all sum of planes intersecting only at the origin (see for instance \cite{Fl}). So we conclude from the standard theory of currents (see for instance the proof of Proposition \ref{p:generalized epi}) that
$\mathcal{F} ((T_{\rho_k}- S)\res \bB_r)  \to 0$ for every $r>0$. 

Let $\eps_{11}$ be the constant of Proposition \ref{p:generalized epi}. We then conclude the existence of a radius $r_0>0$ such that, for every $r< r_0$ there is an
an area minimizing cone $S$ such that $\mathcal{F} ((T_{2r}-S)\res \bB_1) \leq 2\,\eps_{11}$.
We can then apply \eqref{e:generilez epi} for every $0<r<r_0$ such that
$\de (T\res \bB_r) \in {\bf I}_1(\de \bB_r)$ (which holds for a.e.~$r$).
After scaling back and multiplying by $r^2$, we get
\begin{equation}\label{e:diff ineq 1}
\mass(T\res \bB_r) - Q\,\omega_2\,r^2 \leq \left(1-\eps_{12}\right) \Big( \mass(0\cone \de (T\res \bB_r)) - Q\,\omega_2\,r^2\Big) + \bar c\, r^{2+\alpha_0}\quad \text{for a.e.}\; r<r_0\, .
\end{equation}
Set $f(r) := \mass(T\res \bB_r) - Q\,\omega_2\,r^2$.
Since $r \mapsto \mass(T\res \bB_r)$ is monotone, the function
$f$ is differentiable a.e. and its distributional derivative is a measure. Its absolutely continuous part
coincides a.e. with the classical differential and its singular part is nonnegative. Note also that we can assume
$2+\alpha_0 > \eps + \frac{2}{1-\eps} =: \eps + a$ for some $\varepsilon >0$. 

Therefore, by the well-known expansion for the mass of a cone, 
\eqref{e:diff ineq 1} reads
\begin{equation}\label{e:diff ineq 2}
-a\,\bar c\, r^{\eps-1} \leq \frac{d}{dr} \big(r^{-a} f(r) \big)\, ,
\end{equation}
Integrating  \eqref{e:diff ineq 2} we get $-\frac{a}{\eps}\, \bar c\, \big(r^{\eps} - s^{\eps}\big) \leq r^{-a} f(r) - s^{-a} f(s)$ for all $0 < s < r < r_0$.
Setting $e(r) := \frac{f(r)}{\omega_2 r^2}$ this implies
\begin{equation}\label{e:decay}
e(s) \leq \left(\frac{s}{r}\right)^a \, e(r)
+ C\,r^\eps
\qquad \forall\; 0 < s < r < r_0.
\end{equation}

\medskip

\textsc{Step 2.} Consider now the map $F(x):= \frac{x}{|x|}$ and
radii $0 < \frac{t}{2} \leq s \leq t < r_0$.
By the area formula, 
\begin{align}
\mass(F_\sharp (T\res (\bB_t\setminus \bB_s))) & \leq \int_{\bB_t\setminus \bB_s}\frac{|x^\perp|}{|x|^3}\,d\|T\|\notag\\
& \leq \underbrace{\left(\int_{\bB_t\setminus \bB_s}\frac{|x^\perp|^2}{|x|^4}\,d\|T\|\right)^{\sfrac{1}{2}}}_{:=I_1}
\cdot\underbrace{\left(\int_{\bB_t\setminus \bB_s}\frac{1}{|x|^2}\,d\|T\|\right)^{\sfrac{1}{2}}}_{I_2}.\notag
\end{align}
$I_1$ and $I_2$ can be easily estimated using the almost monotonicity
formula
\begin{gather}
I_1^2 \stackrel{\eqref{e:monotonicity formula}}{\leq} 
e(t) - e(s) +C_1\, t^{\alpha_0} \leq e(t) + 2\,C_1\, t^{\alpha_0} 
\stackrel{\eqref{e:decay}}{\leq} C\, t^{\sfrac{\eps}{2}}, \label{e:I1}\\
I_2^2 \leq \frac{\|T\|(\bB_t)}{s^2} \stackrel{\eqref{e:monotonicity formula}}{\leq}
\left(\frac{t}{s}\right)^2
\left[\frac{\|T\|(\bB_{r_0})}{r_0^2} + C_1\,r_0^{\alpha_0}\right]
\leq C,\label{e:I2}
\end{gather}
where we took into account that, by \eqref{e:monotonicity formula},
$e(s) > -C_1 s^\alpha$ for every $s>0$ and that $C>0$ is a constant depending on $r_0$.
In particular we conclude that
\[
\mass(F_\sharp (T\res (\bB_t\setminus \bB_s)))  \leq C \, t^{\sfrac{\eps}{2}}
\qquad \forall\; 0 < \frac{t}{2} \leq s \leq t < r_0,
\]
and, by iteration on diadic intervals,
\begin{equation}\label{e:primo decay}
\mass(F_\sharp (T\res (\bB_r\setminus \bB_s)))  \leq C\, r^{\sfrac{\eps}{2}}
\qquad \forall\; 0 < s < r < r_0.
\end{equation}
Since $\de F_\sharp (T\res (\bB_r\setminus \bB_s)) =
\de (T_r\res \bB_1) - \de (T_s\res \bB_1)$ for a.e.~$0<s<r$, from the definition of $\mathcal{F}$ we get:
\begin{equation}\label{e:ultimo rate}
\mathcal{F}\big(\de (T_r\res \bB_1) - \de (T_s\res \bB_1)\big)  \stackrel{\eqref{e:primo decay}}{\leq}
C\, r^{\sfrac{\eps}{2}}.
\end{equation}
This implies that the currents $\partial (T_r \res \bB_1)$ converges to a unique current $Z$. On the other hand,
by the almost monotonicity formula it follows easily that $T_r \res \bB_1$ converge to the cone $0\cone Z$. Since we already
know that an appropriate sequence converges to $S = \sum_i n_i \llbracket \pi_i \rrbracket$, we conclude that $T_r$ converges to
$S$.

\medskip

\textsc{Step 3. Proof of \eqref{e:roc1} and \eqref{e:roc2}.}
In order to prove \eqref{e:roc1},
it is enough to find integral currents $V$ and $W$ such that
$T_r - T_s = \de H + W$ and $\mass(H) + \mass(W) \leq C r^{\sfrac{\eps}{2}}$.
To this aim, fix a small parameter $a>0$. Let
$\llbracket p,q\rrbracket$ denote
the current in $\mathbf{I}_1 (\R)$ induced by the oriented segment $\{t: p\leq t \leq q\}$.  Similarly $\a{p}\in \mathbf{I}_0 (\R)$ 
is the Dirac mass at the point $p$.
Consider the currents $V_{a} \in \mathbf{I}_3(\R\times \R^{n+2})$ defined by
\[
V_{a}:=\Big(\a{0,1}\times T\res (\bB_r\setminus \bB_a)\Big)\res
\Big\{(t,x)\in \R\times \R^{n+2}\,:\, r^{-1}|x| \leq t\leq s^{-1}|x|\Big\}.
\]
Next, we consider the map $h\colon \R\times (\R^{n+2}\setminus\{0\})\ni (t,x)\to \frac{t\,x}{|x|}
\in \R^{n+2}$
and the currents $H_{a}:=h_\sharp V_{a}$. If $d_1,d_2:\R^{}\times \R^{n+2} \to \R$ denote the functions
$d_1(t,x) := t-s^{-1}|x|$ and $d_2(t,x) := t-r^{-1}|x|$, then
for a.e.~$a>0$ we have
\begin{align*}
\partial V_{a}={}& \a{1} \times T\res (\bB_{r}\setminus \bB_s)-
\a{\textstyle{\frac{a}{r}},\textstyle{\frac{a}{s}}}\times \partial (T\res \bB_a)\\
&+\langle\a{0,1}\times T\res(\bB_r\setminus \bB_a), d_1, 0\rangle
- \langle\a{0,1}\times T\res(\bB_r\setminus \bB_a), d_2, 0\rangle\, .
\end{align*}
Since $\partial$ commutes with the push-forward, we also get
\begin{equation}\label{e:homotopy}
\partial H_a= F_\sharp (T\res (\bB_r\setminus \bB_s) -
\underbrace{h_\sharp\left(\a{\textstyle{\frac{a}{r}},\textstyle{\frac{a}{s}}}\times \partial (T\res \bB_a)\right)}_{Z_a} -
T_r\res (\bB_1\setminus \bB_{\frac{a}{r}})+
T_s\res (\bB_1\setminus \bB_{\frac{a}{s}}),
\end{equation}
where we have used the fact that
$h(t,x)\equiv s^{-1} x$ and 
$h(t,x)\equiv r^{-1} x$ respectively in the sets
$\{(t,x)\in \R\times (\R^{n+2}\setminus \{0\}) \,:\, t=s^{-1}|x|\}$
and $\{(t,x)\in \R\times (\R^{n+2}\setminus \{0\}) \,:\, t=r^{-1}|x|\}$.
It is simple to see that there exists $H$ such that
$H_{a} \to -H$ as $a\downarrow 0$. Thus \eqref{e:homotopy} gives
\[
- \partial H = F_\sharp (T\res (\bB_r\setminus \bB_s)) -
T_r\res \bB_1+ T_s\res \bB_1, 
\]
because $\mass(Z_a) \leq a\, |s^{-1}-r^{-1}| \mass (\de (T_a\res \bB_1))
\leq C\, a\, |s^{-1}-r^{-1}| \mass (\de (T_0\res \bB_1)) \to 0$.
To conclude \eqref{e:roc1} we only need to estimate the mass of $H$.
To this extent, note that
$h_\sharp (\frac{\de}{\de t} \wedge \vec T)= dh (\frac{\de}{\de t}) \wedge  h_\sharp (\vec T)$
and, since $dh\left(\frac{\de}{\de t}\right)=\frac{x}{|x|}$,
\begin{eqnarray*}
H ( \omega) &=& \int_0^1 \int_{\bB_{rt}\setminus \bB_{st}} \langle h_\sharp \left( \textstyle{\frac{\partial}{\partial t}} \wedge \vec{T}\right) , \omega_{h(x)}\rangle d\|R\| (x)\, dt\nonumber\\
&=& \int_0^1 \int_{\bB_{rt}\setminus \bB_{st}} \langle \textstyle{\frac{t x}{|x|}} \wedge (F_{\sharp} \vec{T}), \omega_{tx/|x|}\rangle\,  d\|T\| (x)\, dt
\nonumber\\
&=& \int_0^1 \int_{\bB_{rt}\setminus \bB_{st}} \langle (t F)_\sharp \vec{T}, \omega_{tx/|x|} \ser {\textstyle{\frac{x}{|x|}}}\rangle d\|T\| (x)\, dt\\
&=& \int_0^1 (tF)_\sharp \left(T \res (\bB_{rt}\setminus \bB_{st})\right)  (\omega\ser \textstyle{\frac{x}{|x|}})\, dt
\end{eqnarray*}
Thus
\begin{eqnarray*}
\mass (H) &\leq& \int_0^1 \mass ((tF)_\sharp  \left(T \res (\bB_{rt}\setminus \bB_{st})\right) dt
= \int_0^1 t^2 \mass (F_\sharp (T \res (\bB_{rt}\setminus \bB_{st})))\, dt\\
&\stackrel{\eqref{e:primo decay}}{\leq}& C \int_0^1 r^{\eps/2} t^{2+\eps/2}\, dt \leq C r^{\eps/2}\, .
\end{eqnarray*}
\eqref{e:roc2} follows then from the almost monotonicity formula, see for instance \cite[Lemma 17.11]{Sim}.

\medskip

\textsc{Step 4. Decomposition.} We first introduce the following notation: we call $T$ irreducible in $\bB_r (x)$ if it is not possible to find two 
(integral) currents with $T\res \bB_r(x)=T^1+T^2$ and $\supp (T^1)\cap \supp (T^2)=\{0\}$ (cf. to the notion of {\em indecomposabality} as in \cite[4.2.25]{Fed}: $T$ is indecomposable if it is impossible to write it as $T^1 + T^2$ with $\partial T_1 \res \bB_r (x) = \partial T_2 \res \bB_r (x) = 0$ and $\mass (T_1)+\mass (T_2)= \|T\| (\bB_r (x))$). If $T$ is reducible, then clearly
$\Theta(\|T\|, x)=\Theta(\|T^1\|,x)+\Theta(\|T^2\|,x)$. Since each $T^i$ would be almost minimizing,
$\Theta(\|T^i\|,x)\in \N\setminus \{0\}$ and we can only decompose $T$ finitely many times.
Next suppose by contradiction that $T$ is irreducible in $x$ but its tangent cone $T_{x,0}$ is not a plane. Then, since $T_{x,0}$ is area minimizing,
by \cite{Fl}, there exists $J\geq 2$ such that  
$T_{x,0}=\sum_{i=1}^JQ_i\a{V_i}$, 
where $V_i\subset \R^{n+2}$ are $2$-dimensional linear subspaces such that $V_i\cap V_j=\{0\}$ for every $i\neq j$ and $Q_i\in \N$ satisfy $\sum_{i=1}^J Q_i=Q$. Then consider the currents 
\[
T^i:=T\res \{y\in \R^{m+n}\,:\, \dist (y-x, V_i) \leq C r^{1+\gamma}\}
\quad\text{for } i=1,2, \ldots, J\, . 
\]
By \eqref{e:roc2} this is a decomposition of $T$ in two non-zero currents whose supports intersect each other only in $\{0\}$, which is a contradiction.

\section{Proof of Lemma \ref{l:construction}}

As already mentioned, any $2$-dimensional area-minimizing cone $S$ is the sum of (integer multiples) of finitely many oriented planes, each pair of which
intersects only at the origin. Therefore the support of the cycle $R := \partial (S\res \bB_1)$ of the statement of Lemma \ref{l:construction} consists of a finite number (say $N$) of disjoint equatorial circles of $\partial \bB_1$. By condition (iii), we can thus assume that $Z$ splits into $N$ cycles, each close (in the sense of (i), (ii) and (iii)) to an integer multiple of an equatorial circle of $\partial B_1$. Thus, without loss
of generality, from now on we assume that $S$ is given by $Q \a{\pi_0}$, where $\pi_0$ is the (oriented) plane $\R^2\times \{0\}\subset \R^{n+2}$ and $Q$ is a positive integer. Correspondingly, $R = Q \a{\gamma_0}$ where $\gamma_0$ is the oriented equatorial circle
$\pi_0 \cap \partial \bB_1$. 

\medskip

{\sc Step 1. Reduction to a Lipschitz winding curve.} We next introduce the notation $B_r (x, \pi)$ for the $2$-dimensional disk $x+ \bB_r (0) \cap \pi$ and $\bC_r (x, \pi)$ for the cylinder $B_r (x, \pi) + \pi^\perp$, omitting $x$ when it is the origin and $\pi$ when it is the plane $\pi_0$.  Given any $1$-dimensional cycle $W$ we consider the infinite $2$-dimensional cone $T$ with vertex $0$ and spherical cross section $W$, namely $\lim_{R\to \infty} (\iota_{0,R})_\sharp (0\cone W)$ and denote it by $(0\cone W)_\infty$. The {\em cylindrical excess} of any infinite $2$-dimensional cone $T$ in $\bC_1 (\tau)$ is then given by
\[
\bE (T, \tau) := \frac{1}{2} \int_{\bC_1 (\tau)} |\vec{T} (x) -\tau|^2 \, d\|T\| (x)
\]
whereas the cylindrical excess of $Z$ is
\[
\bE (Z) := \min_\tau \bE ((0\cone Z)_\infty, \tau)\, .
\]
It is simple to see that under the assumptions (i), (ii) and (iii), any minimum point $\tau$ for $(0\cone Z)_\infty$ in the expression above must be close to $\pi_0$. 

Let now $\bP$ be the orthogonal projection onto $\partial \bB_1$ (which obviously it is defined in $\R^{n+2}\setminus \{0\}$). For each $\pi$, such projection is invertible when we restrict its domain of definition to $\partial \bC_1 (\pi)$ and its target to $\partial \bB_1 \setminus \pi^\perp$. We then let $\bP_\pi^{-1}$ be its inverse. Note also that, under the assumptions (i), (ii) and (iii), when $\tau$ is close enough to $\pi_0$, $\supp (Z)\subset \bB_1 \setminus \tau^\perp$. Therefore, for
any such $\tau$ we have 
\[
(0\cone Z)_\infty \res \bC_1 (\tau)= 0 \cone (\bP_\tau^{-1})_\sharp Z\, .
\] 
In particular such identity is valid for the $\pi$ which minimizes
$\bE ((0\cone Z)_\infty, \tau)$. If $Z$ is as in the statement of the lemma, by a well-known result in geometric measure theory, $Z$ can be written as the sum of (at most countably many) $1$-dimensional cycles $Z_i$, where each $Z_i$ is a simple closed Lipschitz curve and $\sum \mass (Z_i) = \mass (Z)$. Observe also that, if $\eps$ is sufficiently small, then $(\proj_{\pi_0})_\sharp (\bP_{\pi_0}^{-1})_\sharp Z_i$ (where
$\proj_{\pi_0}$ is the orthogonal projection onto $\pi_0$) equals $k_i \a{\gamma_0}$ for some nonnegative integer $k_i$. We thus have $\sum k_i = Q$ and it follows by standard arguments that each $Z_i$ fulfills the assumptions (i), (ii) and (iii) of the Lemma with $k_i$ in place of $Q$ and with $\eps' >0$ in place of $\eps$, where the constant $\eps'\downarrow 0$ as $\eps\downarrow 0$. Thus, it suffices to prove the main estimate for each $Z_i$ and sum it over $i$.  Observe next that assumption (ii) in the Lemma excludes the possibility that $k_i < 0$ for some $i$. Moreover, the case $k_i=0$ corresponds to the trivial situation in which the minimizing cone $S$ is $0$. In this case 
$\mass (Z_i) < \eps_{13}$ and we can use the the isoperimetric inequality to find an $H$ such that $\partial H = Z_i$ and
\[
\|H\| (\bB_1) \leq C (\mass (Z))^2 \leq C \eps_{13} \mass (Z) \leq C \eps_{13} {\textstyle{\frac{1}{2}}} \|0\cone Z\| (\bB_1)\, .
\]
It suffices therefore to consider the case $k_i >0$.

Summarizing, in addition to (i), (ii) and (iii) we can also assume, w.l.o.g., the following:
\begin{itemize}
\item[(iv)] $R = Q \a{\gamma_0}$ for some integer $Q>0$;
\item[(v)] $Z = \eta_\sharp \a{[0, \mass (Z)]}$, where $\eta: [0, \mass (Z)] \to \partial \bB_1$ is Lipschitz and $\eta (0) = \eta (\mass (Z))$;
\item[(vi)] If $\bE ((0\cone Z)_\infty, \tau) = \bE (Z)$, then $\bE (Z, \tau) < \bar\eps$ and $(\proj_{\tau})_{\sharp} (\bP_{\tau}^{-1})_\sharp Z = Q \a{\gamma_0}$
(where $\bar{\eps} (\eps, Q) \downarrow 0$ as $\eps\downarrow 0$).
\end{itemize}

For any fixed $\delta>0$, we next use \cite[Proposition 2.7]{Wh} to find a second curve $\zeta' : [0, 2Q \omega_2] \to \partial \bC_1 (\tau)$ with the following properties (recall that $2\omega_2$ is the length of the unit circle in $\R^2$):
\begin{itemize}
\item[(a1)] $\zeta' (\vartheta) = (\cos \vartheta, \sin \vartheta, f' (\vartheta))\in \tau \times \tau^\perp$ for some Lipschitz function $f': [0, 2Q\omega_2]\to \tau^\perp$ with $f'(0)= f' (2Q\omega_2)$ and $\|f'\|_\infty + \Lip (f') \leq \delta$;
\item[(a2)] If we set $Z' = \zeta'_\sharp \a{[0, 2Q\omega_2]}$, then $\mass ((\bP_{\tau}^{-1})_\sharp Z - Z') \leq \bE (Z)/ C (\delta)$;
\item[(a3)] $\bE ((0\cone Z')_\infty, \tau) \leq \bE ((0\cone Z)_\infty, \tau) = \bE (Z)$.
\end{itemize} 
$\delta$ will be chosen (sufficiently small) later.  Since from (a2) we conclude easily
\[
\mass (((0\cone Z)_\infty - (0\cone Z') _\infty)\res \bC_2 (\tau')) \leq \bE (Z) / C (\delta)\, , 
\]
we also infer
\[
\mass (\partial ((0\cone Z - 0 \cone Z')\res \bC_{1/2} (\tau'))) \leq \bE (Z) / C(\delta)\, .
\]
After applying a rotation we can assume that $\tau' = \pi_0$. We thus achieve, in addition to (i)-(vi), the condition
\begin{itemize}
\item[(vii)] $\bE ((0\cone Z')_\infty, \pi_0) = \bE ((0\cone Z')_\infty)$ and $\mass (\partial ((0\cone Z - 0 \cone Z')\res \bC_{1/2})) \leq \bE (Z) / C(\delta)$.
\end{itemize}
Next, observe that if $\tau'$ minimizes $\bE ((0\cone Z')_\infty, \tau')$, then 
\[
|\tau' - \tau| \leq  C \, \bE((0\cone Z')_\infty, \tau)\leq C \bE(Z) \leq C\,\eps
\] 
for some geometric constant $C$. Hence elementary considerations (see for instance the reparametrization Lemma \cite[Lemma B.1]{DS4}) lead easily to the following conclusions:
\begin{itemize}
\item[(viii)] the cycle $Z'':= \partial  ((0\cone Z') \res \bC_{1/2})$ is of the form
$\zeta_\sharp \a{[0, 2Q\omega_2]}$ for some $\zeta (\vartheta) = \frac{1}{2} (\cos \vartheta, \sin \vartheta, f (\vartheta))\in \pi_0\times \pi_0^\perp$, where
$|f| + \Lip (f) \leq C \delta$ ($C$ being a geometric constant);
\item[(ix)] $\bE ((0\cone Z'')_\infty, \pi_0) = \bE (Z'') \stackrel{(a3)\&(vii)}{ \leq} \bE (Z) < \bar\eps$.
\end{itemize}

\medskip

{\bf Step 2. Cylindrical epiperimetric inequality and conclusion.} Consider the Fourier expansion of $f$ as 
\[
f (\vartheta) = \alpha_0 + \sum_{i=0}^\infty \left(\alpha_i \cos \left(\textstyle{\frac{i}{Q}} \vartheta\right) + \beta_i
\sin \left(\textstyle{\frac{i}{Q}} \vartheta\right)\right)\, 
\]
and let
\[
P (f) := \alpha_Q \cos + \beta_Q \sin\, .
\]
We first claim the existence of a constant $K$ (depending only upon $Q$) such that, provided $\delta$ is smaller than some geometric constant, then 
\begin{equation}\label{e:Fourier}
\|(f - P (f))\|_{W^{1,2}} \geq K \|f\|_{W^{1,2}}\, .
\end{equation}
Indeed consider the $2$-dimensional plane $\tau$ which contains the image of the map $\vartheta \mapsto (\cos \vartheta, \sin \vartheta, P (f) (\vartheta))$. It is then straightforward to check that 
\begin{itemize}
\item $\bC_1 (\tau) \cap \supp ((0\cone Z'')_\infty) \subset \bC_2$
\item If $x= r \zeta (\vartheta) \in \supp (Z'')$ and $r>0$, then
\begin{align}
&|\vec T (x) - \pi_0| \geq \frac{1}{C} \left(|Df (\vartheta)| + |f (\vartheta)|\right)\\
&|\vec T (x) - \tau| \leq C \left(|D (f - P (f)) (\vartheta)| + |(f - P (f)) (\vartheta)|\right)\, ,
\end{align}
where $C$ is just a geometric constant.
\end{itemize}

Using that $\Lip (f) \leq \delta$, by the area formula we easily conclude that
\begin{align}
\bE ((0\cone Z'')_\infty, \pi_0) &\geq \frac{1}{C} \|f\|_{W^{1,2}}^2\\
\bE ((0\cone Z'')_\infty, \tau) &\leq C \|f - P (f)\|_{W^{1,2}}^2\, .
\end{align}
Since $C$ is a fixed geometric constant, \eqref{e:Fourier} follows easily from 
\[
\bE ((0\cone Z''), \pi_0) = \bE (Z'') 
\leq \bE ((0\cone Z'')_\infty, \tau)\, .
\]

Next, following \cite[Proposition 2.4]{Wh} we consider the map $g: ]0, \frac{1}{2}]\times [0, 2Q\omega_2] \to \R^n$ given by
\[
g (r, \vartheta) =  \alpha_0 + \sum_{i=0}^\infty r^{\sfrac{i}{Q}} \left(\alpha_i \cos \left(\textstyle{\frac{i}{Q}} \vartheta\right) + \beta_i
\sin \left(\textstyle{\frac{i}{Q}} \vartheta\right)\right)
\]
and let $H' = g_{\sharp} \a{]0, \frac{1}{2}]\times [0, 2Q\omega_2]}$. By \cite[Proposition 2.4]{Wh} we have $\partial H' = Z''$ and 
\[
\mass (H') - \frac{Q}{4}\omega_2 \leq \frac{1}{4} (1- 8 \eps_{13}) \bE (Z'') \leq \frac{1}{4} (1-8\eps_{13}) \bE (Z)\, ,
\]
for some $\eps_{13} (Q, K)>0$.

Next, using the isoperimetric inequality we find a $2$-dimensional current $K$ such that $\partial K = \partial ((0\cone Z)\res \bC_{1/2}) - Z''
= \partial ((0\cone Z - 0 \cone Z')\res \bC_{1/2})$
and 
\[
\mass (K) \leq C (\mass (\partial ((0\cone Z)\res \bC_{1/2}) - Z''))^2 \stackrel{(vii)}{\leq} C (\delta) \bE (Z)^2\, .
\]
Thus, if we set $H := H' + K + 0 \cone Z \res \bB_1 \setminus \bC_{1/2}$, we have $\partial H = Z$ and
\[
\mass (H) \leq \frac{Q}{4} \omega_2 + \frac{1}{4} (1-8\eps_{13}) \bE (Z) + C (\delta) \bE (Z)^2 + \mass ((0\cone Z)\res \bB_1 \setminus \bC_{1/2})\, .
\]
Since $\bE (Z) < \bar\eps$, it suffices to choose $\eps$ sufficiently small to achieve
\[
\mass (H) \leq \frac{Q}{4} \omega_2 + \frac{1}{4} (1-4\eps_{13}) \bE (Z) + \mass ((0\cone Z)\res \bB_1 \setminus \bC_{1/2})\, .
\]
Next recall that 
\begin{align*}
\frac{1}{4} \bE(Z) \leq & \frac{1}{4} (\bE ((0\cone Z)_\infty, \pi_0)  = \frac{1}{8} \int_{\bC_1} |\vec{T} - \pi_0|^2 d\|0\cone Z\|\nonumber\\
= & \frac{1}{4} (\mass ((0\cone Z)\res \bC_1) - Q \omega_2) = \mass ((0\cone Z)\res \bC_{1/2}) - \frac{Q\omega_2}{4}\, ,
\end{align*}
where the first equality in the last line is due to ${\proj_{\pi_0}}_\sharp (0\cone Z) = Q \a{B_1 (0, \pi_0)}$. We therefore infer
\begin{align*}
\mass (H) - Q \omega_2 &\leq \mass (0\cone Z) + \eps_{13} Q\omega_2- 4\eps_{13} \mass ((0\cone Z)\res \bC_{1/2}) - Q \omega_2\\
&\leq \mass (0\cone Z) + \eps_{13} Q\omega_2 - 4 \eps_{13} \mass ((0\cone Z) \res \bB_{1/2}) - Q\omega_2\\
&= \mass (0\cone Z) +\eps_{13} Q \omega_2 - \eps_{13} \mass (0\cone Z) - Q \omega_2\\
& = (1-\eps_{13}) (\mass (0\cone Z) - Q \omega_2)\, .
\end{align*}

\bibliographystyle{plain}
\bibliography{references-TU}

\end{document}